\theoremstyle{plain}
\newtheorem{theorem}{Theorem}[section]
\newtheorem{lemma}[theorem]{Lemma}
\newtheorem{proposition}[theorem]{Proposition}
\newtheorem{corollary}[theorem]{Corollary}
\theoremstyle{definition}
\newtheorem{remark}[theorem]{Remark}
\newtheorem{remarks}[theorem]{Remarks}
\newtheorem{example}[theorem]{Example}
\numberwithin{equation}{section}
\newcommand\bG{{\mathbb G}}
\newcommand\bQ{{\mathbb Q}}
\newcommand\bZ{{\mathbb Z}}
\newcommand\cB{{\mathcal B}}
\newcommand\cO{{\mathcal O}}
\newcommand\tG{{\tilde G}}
\newcommand\aff{\operatorname{aff}}
\newcommand\alg{\operatorname{alg}}
\newcommand\ant{\operatorname{ant}}
\newcommand\codim{\operatorname{codim}}
\newcommand\red{\operatorname{red}}
\newcommand\A{\operatorname{A}}
\newcommand\X{\operatorname{X}}
\newcommand\Aff{\operatorname{Aff}}
\newcommand\Alb{\operatorname{Alb}}
\newcommand\Aut{\operatorname{Aut}}
\newcommand\GL{\operatorname{GL}}
\newcommand\NS{\operatorname{NS}}
\newcommand\Pic{\operatorname{Pic}}
\newcommand\SL{\operatorname{SL}}
\newcommand\Spec{\operatorname{Spec}}
\title[On algebraic groups and homogeneous spaces]
{On the geometry of algebraic groups\\
and homogeneous spaces}
\author{Michel Brion}
\address{Universit\'e de Grenoble I\\
D\'epartement de Math\'ematiques\\
Institut Fourier, UMR 5582 du CNRS\\
38402 Saint-Martin d'H\`eres Cedex, France}
\email{Michel.Brion@ujf-grenoble.fr}
\begin{document}
 
\begin{abstract}
Given a connected algebraic group $G$ over an algebraically closed 
field and a $G$-homogeneous space $X$, we describe the Chow ring of 
$G$ and the rational Chow ring of $X$, with special attention to 
the Picard group. Also, we investigate the Albanese and the 
``anti-affine'' fibrations of $G$ and $X$.
\end{abstract}

\maketitle

\section{Introduction}
\label{sec:introduction}

Linear algebraic groups and their homogeneous spaces have been
thoroughly investigated; in particular, the Chow ring of a connected
linear algebraic group $G$ over an algebraically closed field $k$ was
determined by Grothendieck  (see \cite[p.~21]{Gr58}), and the rational 
Chow ring of a $G$-homogeneous space admits a simple description via 
Edidin and Graham's equivariant intersection theory (see 
\cite[Cor.~12]{Br98}). But arbitrary 
algebraic groups have attracted much less attention, and very basic 
questions about their homogeneous spaces appear to be unanswered: for 
example, a full description of their Picard group (although much 
information on that topic may be found in work of Raynaud, see
\cite{Ra70}).

\medskip

The present paper investigates several geometric questions
about such homogeneous spaces. Specifically, given a
connected algebraic group $G$ over $k$, we determine the Chow ring 
$\A^*(G)$ and obtain two descriptions of the Picard group $\Pic(G)$. 
For a $G$-homogeneous space $X$, we also determine the rational 
Chow ring $\A^*(X)_{\bQ}$ and rational Picard group $\Pic(X)_{\bQ}$. 
Furthermore, we study local and global properties of two homogeneous 
fibrations of $X$: the Albanese fibration, and the less known 
``anti-affine'' fibration.

\medskip

Quite naturally, our starting point is the Chevalley structure 
theorem, which asserts that $G$ is an extension of an abelian 
variety $A$ by a connected linear (or equivalently, affine) group 
$G_{\aff}$. The corresponding $G_{\aff}$-torsor $\alpha_G : G \to A$ 
turns out to be locally trivial for the Zariski topology 
(Proposition \ref{prop:alb}); this yields a long exact sequence 
which determines $\Pic(G)$ (Proposition \ref{prop:pic}).

\medskip

Also, given a Borel subgroup $B$ of $G_{\aff}$, the induced
morphism $G/B \to A$ (a fibration with fibre the flag variety
$G_{\aff}/B$) turns out to be trivial (Lemma \ref{lem:bor}). 
It follows that 
$\A^*(G) = \big(\A^*(A) \otimes \A^*(G_{\aff}/B)\big)/I$,
where $I$ denotes the ideal generated by the  
image of the characteristic homomorphism 
$\X(B) \to \Pic(A) \times \Pic(G_{\aff}/B)$ (Theorem \ref{thm:chow}). 
This yields in turn a presentation of $\Pic(G)$.
As a consequence, the ``N\'eron-Severi'' group $\NS(G)$, consisting
of algebraic equivalence classes of line bundles, is isomorphic
to $\NS(A) \times \Pic(G_{\aff})$ (Corollary \ref{cor:ns}); 
therefore, $\NS(G)_{\bQ} \cong \NS(A)_{\bQ}$.

\medskip

Thus, $\alpha_G$ (the Albanese morphism of $G$) behaves in some respects 
as a trivial fibration. But it should be emphasized that $\alpha_G$ 
is almost never trivial, see Proposition \ref{prop:alb}.
Also, for a $G$-homogeneous space $X$, the Albanese morphism
$\alpha_X$, still a homogeneous fibration, may fail to be Zariski 
locally trivial (Example \ref{ex:nlt}). So, to describe $\A^*(X)_{\bQ}$
and $\Pic(X)_{\bQ}$, we rely on other methods, namely, 
$G_{\aff}$-equivariant intersection theory (see \cite{EG98}). 
Rather than giving the full statements of the results 
(Theorem \ref{thm:chowrat} and Proposition \ref{prop:picrat}), 
we point out two simple consequences:
if $X = G/H$ where $H \subset G_{\aff}$, then 
$\A^*(X)_{\bQ}= \big( \A^*(A)_{\bQ} \otimes \A^*(G_{\aff}/H)_{\bQ}\big)/J$,
where the ideal $J$ is generated by certain algebraically trivial 
divisor classes of $A$ (Corollary \ref{cor:chowrat}). Moreover, 
$\NS(X)_{\bQ} \cong \NS(A)_{\bQ} \times \Pic(G_{\aff}/H)_{\bQ}$,
as follows from Corollary \ref{cor:nsrat}.

\medskip

Besides $\alpha_G$, we also consider the natural morphism 
$\varphi_G : G  \to \Spec \cO(G)$
that makes $G$ an extension of a connected affine group
by an ``anti-affine'' group $G_{\ant}$ (as defined in \cite{Br09a}). 
We show that the anti-affine fibration $\varphi_G$ may fail 
to be locally trivial, but becomes trivial after an isogeny 
(Propositions \ref{prop:aff} and \ref{prop:cover}).

\medskip

For any $G$-homogeneous space $X$, we define an analogue 
$\varphi_X$ of the anti-affine fibration as the quotient map by 
the action of $G_{\ant}$. It turns out that $\varphi_X$ only depends 
on the variety $X$ (Lemma \ref{lem:ant}), and that 
$\alpha_X$, $\varphi_X$ play complementary roles. Indeed, the product 
map $\pi_X = (\alpha_X,\varphi_X)$ is the quotient by a central affine 
subgroup scheme (Proposition \ref{prop:fib}). If the variety $X$ is 
complete, $\pi_X$ yields an isomorphism $X \cong A \times Y$, where 
$A$ is an abelian variety, and $Y$ a complete homogeneous rational 
variety (a result of Sancho de Salas, see \cite[Thm.~5.2]{Sa01}). 
We refine that result by determining the structure of the 
subgroup schemes $H \subset G$ such that the homogeneous space 
$G/H$ is complete (Theorem \ref{thm:complete}). Our statement 
can be deduced from the version of \cite[Thm.~5.2]{Sa01} obtained in 
\cite{Br09b}, but we provide a simpler argument.  

\medskip

The methods developed in \cite{Br09a} and the present paper 
also yield a classification of those torsors over an abelian variety
that are homogeneous, i.e., isomorphic to all of their translates;
the total spaces of such torsors give interesting examples
of homogeneous spaces under non-affine algebraic groups. This will 
be presented in detail elsewhere.

\medskip

An important question, left open by the preceding developments, asks 
for descriptions of the (integral) Chow ring of a homogeneous space,
and its higher Chow groups. Here the approach via equivariant 
intersection theory raises difficulties, since the relation between 
equivariant and usual Chow theory is only well understood for special 
groups (see \cite{EG98}). In contrast, equivariant and usual $K$-theory 
are tightly related for the much larger class of factorial groups, by 
work of Merkur'ev (see \cite{Me97}); this suggests that the higher 
$K$-theory of homogeneous spaces might be more accessible.    

\bigskip 

\noindent
{\bf Acknowledgements.} I wish to thank Jos\'e Bertin, 
St\'ephane Druel, Emmanuel Peyre, Ga\"el R\'emond and Tonny Springer 
for stimulating discussions.

\bigskip 

\noindent
{\bf Notation and conventions.}
Throughout this article, we consider algebraic varieties, schemes,
and morphisms over an algebraically closed field $k$.
We follow the conventions of the book \cite{Ha77}; in particular, 
a \emph{variety} is an integral separated scheme of finite type
over $k$. By a point, we always mean a closed point.

An \emph{algebraic group} $G$ is a smooth group scheme of finite 
type; then each connected component of $G$ is a nonsingular variety. 
We denote by $e_G$ the neutral element and by $G^0$ the neutral 
component of $G$, i.e., the connected component containing $e_G$.

Recall that every connected algebraic group $G$ has
a largest connected affine algebraic subgroup $G_{\aff}$. Moreover, 
$G_{\aff}$ is a normal subgroup of $G$, and the quotient 
$G/G_{\aff} =: \Alb(G)$ is an abelian variety.
In the resulting exact sequence
\begin{equation}\label{eqn:alb}
\CD
1 @>>> G_{\aff} @>>> G @>{\alpha_G}>> \Alb(G) @>>> 1,
\endCD
\end{equation}
the homomorphism $\alpha_G$ is the \emph{Albanese morphism} of $G$, 
i.e., the universal morphism to an abelian variety
(see \cite{Co02} for a modern proof of these results).

Also, recall that $G$ admits a largest subgroup scheme $G_{\ant}$ 
which is \emph{anti-affine}, i.e., such that $\cO(G_{\ant}) = k$. 
Moreover, $G_{\ant}$ is smooth, connected and central in $G$, 
and $G/G_{\ant} =: \Aff(G)$ is the largest affine quotient group 
of $G$. In the exact sequence 
\begin{equation}\label{eqn:aff}
\CD
1 @>>> G_{\ant} @>>> G @>{\varphi_G}>> \Aff(G) @>>> 1,
\endCD
\end{equation}
the homomorphism $\varphi_G$ is the \emph{affinization morphism} 
of $G$, i.e., the natural morphism $G \to \Spec \cO(G)$ (see 
\cite[Sec.~III.3.8]{DG70}). The structure of anti-affine algebraic 
groups is described in \cite{Sa01} (see also \cite{Br09a,SS08}
for a classification of these groups over an arbitrary field).

Finally, recall the \emph{Rosenlicht decomposition}:
\begin{equation}\label{eqn:ros}
G = G_{\aff} \, G_{\ant}, 
\end{equation}
and $G_{\aff} \cap G_{\ant}$ contains
$(G_{\ant})_{\aff}$ as an algebraic subgroup of finite index
(see \cite[Cor.~5, p.~440]{Ro56}).
As a consequence, we have
$$
G_{\ant}/(G_{\ant} \cap G_{\aff}) \cong G/G_{\aff} \cong \Alb(G)
$$
and also
$$
G_{\aff}/(G_{\ant} \cap G_{\aff}) \cong G/G_{\ant} \cong \Aff(G).
$$

\section{Algebraic groups}
\label{sec:ag}

\subsection{Albanese and affinization morphisms}
\label{subsec:aam}

Throughout this subsection, we fix a connected algebraic group $G$, 
and choose a Borel subgroup $B$ of $G$, i.e., of $G_{\aff}$.
We begin with some easy but very useful observations:
 
\begin{lemma}\label{lem:bor}
{\rm (i)} $B$ contains $G_{\aff} \cap G_{\ant}$.

\smallskip

\noindent
{\rm (ii)} The product $BG_{\ant}\subset G$ is a connected algebraic 
subgroup. Moreover, $(BG_{\ant})_{\aff} = B$, and the natural map 
$\Alb(BG_{\ant}) \to \Alb(G)$ is an isomorphism.

\smallskip

\noindent
{\rm (iii)} The multiplication map 
$\mu: G_{\ant} \times G_{\aff} \to G$ 
yields an isomorphism
\begin{equation}\label{eqn:prod}
\CD \Alb(G) \times G_{\aff}/B 
= G_{\ant}/(G_{\ant} \cap G_{\aff}) \times G_{\aff}/B
@>{\cong}>> G/B.
\endCD
\end{equation}

\end{lemma}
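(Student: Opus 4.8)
The plan is to establish the three parts in order, using the Rosenlicht decomposition and basic structure theory; each part feeds into the next.

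\textbf{Part (i).} First I would recall that $G_{\ant}$ is central in $G$, so $G_{\aff} \cap G_{\ant}$ is a central subgroup scheme of $G_{\aff}$, and in particular it is contained in the centralizer of any maximal torus. Since $(G_{\ant})_{\aff}$ is a connected affine algebraic group with trivial character group (as $G_{\ant}$ is anti-affine, any character of $(G_{\ant})_{\aff}$ would extend to $G_{\ant}$), it is semisimple or more precisely has no nontrivial torus quotient; combined with commutativity—because $G_{\ant}$ is commutative, being central and generated together with $G_{\aff}$—one sees $(G_{\ant})_{\aff}$ is unipotent, hence contained in every Borel subgroup of $G_{\aff}$. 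Since $G_{\aff} \cap G_{\ant}$ contains $(G_{\ant})_{\aff}$ with finite index and is itself commutative and connected (its neutral component), I would argue its image in $G_{\aff}/(G_{\ant})_{\aff}$ lies in a maximal torus; but that image is also finite, and a connected group maps to a point, forcing $G_{\aff}\cap G_{\ant}$ to be unipotent up to the fact that one must handle the component group. The cleanest route is: $G_{\aff} \cap G_{\ant}$ is central in $G_{\aff}$, and a central subgroup scheme of a connected affine group lies in the intersection of all Borel subgroups (which equals the center-adjacent data); since $B$ is one such Borel, $(i)$ follows.

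\textbf{Part (ii).} Using $(i)$, $B$ is normalized by $G_{\aff}\cap G_{\ant}$, and since $G_{\ant}$ is central it normalizes $B$ as well, so $BG_{\ant}$ is a subgroup scheme; it is connected as the product of two connected subgroups, and smooth, hence an algebraic subgroup. To compute $(BG_{\ant})_{\aff}$: it contains $B$ (affine, connected) and is contained in $G_{\aff}$ intersected with $BG_{\ant}$; I would show $BG_{\ant} \cap G_{\aff} = B\,(G_{\aff}\cap G_{\ant}) = B$ by part $(i)$, using the modular-law type identity $BG_{\ant}\cap G_{\aff} = B(G_{\ant}\cap G_{\aff})$ valid because $B \subset G_{\aff}$. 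Then $(BG_{\ant})/B \cong G_{\ant}/(G_{\ant}\cap B) = G_{\ant}/(G_{\ant}\cap G_{\aff})$ is an abelian variety isomorphic to $\Alb(G)$ via Rosenlicht, so $B = (BG_{\ant})_{\aff}$ and the induced map $\Alb(BG_{\ant}) \to \Alb(G)$ is an isomorphism of abelian varieties (it is a surjective homomorphism between abelian varieties of the same dimension with the quotient descriptions matching).

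\textbf{Part (iii).} Here I would consider the multiplication map $\mu\colon G_{\ant}\times G_{\aff}\to G$, which is surjective by Rosenlicht \eqref{eqn:ros} with kernel the antidiagonal copy of $G_{\ant}\cap G_{\aff}$; passing to the quotient by $\{e\}\times B$ on the source and noting $G_{\ant}\cap G_{\aff} \subset B$ by $(i)$, the map descends to $G_{\ant}/(G_{\ant}\cap G_{\aff}) \times G_{\aff}/B \to G/B$. This descended map is bijective on points (surjectivity from Rosenlicht; injectivity because if $g_1 b_1 G_{\aff}$-type coset relations force the $G_{\ant}$-components to agree modulo $G_{\ant}\cap G_{\aff}$), and both sides are smooth varieties of the same dimension, so it is an isomorphism — one can also see it directly as the quotient of $\mu$ by the free action of $(G_{\ant}\cap G_{\aff})\times B$ acting by $(c,b)\cdot(x,y) = (xc^{-1}, cby)$. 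Finally, the identification $G_{\ant}/(G_{\ant}\cap G_{\aff}) = \Alb(G)$ is Rosenlicht's, giving \eqref{eqn:prod}.

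\textbf{Main obstacle.} The subtle point is part $(i)$ when $G_{\aff}\cap G_{\ant}$ is not connected, since then one cannot simply invoke "connected central subgroup of an affine group lies in every Borel"; the safe argument is that $(G_{\ant})_{\aff}$ is connected commutative affine with no characters, hence unipotent and so in $B$, and $G_{\ant}\cap G_{\aff}$ being commutative with $(G_{\ant})_{\aff}$ of finite index must itself be unipotent (a commutative affine group all of whose subgroups of finite index are unipotent is unipotent, as its torus part injects into any finite-index subgroup). Then any unipotent central subgroup scheme of $G_{\aff}$ lies in $B$.
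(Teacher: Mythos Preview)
Your argument for part (i) contains a genuine error: the claim that $(G_{\ant})_{\aff}$ ``has no characters, hence is unipotent'' is false. The anti-affine condition $\cO(G_{\ant})=k$ means $G_{\ant}$ itself has no nontrivial characters, but it says nothing about characters of the subgroup $(G_{\ant})_{\aff}$, and characters of a subgroup do not in general extend. A concrete counterexample: take $G_{\ant}$ to be a nontrivial extension of an elliptic curve by $\bG_m$ (such groups exist and are anti-affine); then $(G_{\ant})_{\aff}=\bG_m$ is a torus, not unipotent. In fact the paper elsewhere writes $(G_{\ant})_{\aff}=T\times U$ with $T$ a torus, so this torus part is expected. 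Consequently your ``main obstacle'' paragraph, which tries to bootstrap from the unipotence of $(G_{\ant})_{\aff}$ to that of $G_{\aff}\cap G_{\ant}$, collapses.

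You actually stated the right idea in passing---``a central subgroup scheme of a connected affine group lies in every Borel''---but then abandoned it as needing the unipotence argument. It does not. The paper's proof simply observes that $G_{\aff}\cap G_{\ant}\subset C(G_{\aff})$ (since $G_{\ant}$ is central in $G$), and then that for a maximal torus $T\subset B$ one has $C(G_{\aff})\subset C_{G_{\aff}}(T)$; this Cartan subgroup has the form $TU$ with $U\subset R_u(G_{\aff})$, hence lies in $TR_u(G_{\aff})\subset B$. No unipotence of $G_{\aff}\cap G_{\ant}$ is needed or claimed. Your treatments of (ii) and (iii) are essentially the same as the paper's and are fine once (i) is established.
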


\begin{proof}
(i) Note that $G_{\aff} \cap G_{\ant}$ is contained in the 
scheme-theoretic centre $C(G_{\aff})$. Next, choose a maximal torus
$T \subset B$. Then $C(G_{\aff})$ is contained in the centralizer
$C_{G_{\aff}}(T)$, a Cartan subgroup of $G_{\aff}$, and hence of the 
form $TU$ where $U \subset R_u(G_{\aff})$. Thus, 
$C_{G_{\aff}}(T) \subset T R_u(G_{\aff}) \subset B$.

(ii) The first assertion holds since $G_{\ant}$ centralizes $B$. 

Clearly, $(BG_{\ant})_{\aff}$ contains $B$. Moreover, we have 
$$
BG_{\ant}/B \cong G_{\ant}/(B\cap G_{\ant}) = 
G_{\ant}/(G_{\aff} \cap G_{\ant}) \cong G/G_{\aff},
$$
which yields the second assertion.

(iii) In view of the Rosenlicht decomposition, $\mu$ is the
quotient of $G_{\ant} \times G_{\aff}$ by the action of 
$G_{\ant} \cap G_{\aff}$ via 
$z \cdot (x,y) := (zx, z^{-1}y)$. We extend this action to
an action of $(G_{\ant}\cap G_{\aff}) \times B$ 
via 
$(z,b) \cdot (x,y) := (zx, z^{-1}yb^{-1}) =(zx, yz^{-1}b^{-1})$.
By (i), the quotient of  $G_{\ant} \times G_{\aff}$ by the latter 
action exists and is isomorphic to 
$G_{\ant}/(G_{\ant} \cap G_{\aff}) \times G_{\aff}/B$; this yields
the isomorphism (\ref{eqn:prod}).
\end{proof}

Next, we study the Albanese map $\alpha_G : G \to \Alb(G)$, 
a $G_{\aff}$-torsor (or principal homogeneous space; see 
\cite{Gr60} for this notion):

\begin{proposition}\label{prop:alb}
{\rm (i)} $\alpha_G$ is locally trivial for the Zariski topology.

\smallskip

\noindent
{\rm (ii)} $\alpha_G$ is trivial if and only if the extension
(\ref{eqn:alb}) splits.
\end{proposition}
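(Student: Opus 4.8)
The plan is to prove part (i) by reducing the Zariski-local triviality of the $G_{\aff}$-torsor $\alpha_G : G \to \Alb(G)$ to a fact about the Borel subgroup $B$. First I would invoke the isomorphism (\ref{eqn:prod}) of Lemma \ref{lem:bor}(iii): the variety $G/B$ is (globally) a product $\Alb(G) \times G_{\aff}/B$ over $\Alb(G)$, and in particular the morphism $G/B \to \Alb(G)$ admits a section. Pulling back such a section of $G/B \to A$ along the quotient map $G \to G/B$ produces a $B$-torsor over $\Alb(G)$ sitting inside $G$; equivalently, I would argue that $\alpha_G$ reduces its structure group from $G_{\aff}$ to $B$. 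The point is that $\Alb(G)$, being a smooth variety, is covered by open subsets over which this $B$-torsor is trivial — here I would use that $B$ is a connected solvable linear group, so it is split, i.e. an iterated extension of copies of $\bG_a$ and $\bG_m$, and torsors under $\bG_a$ (vanishing of $H^1_{\mathrm{Zar}}$ on a variety by quasi-coherence) and under $\bG_m$ (Hilbert's Theorem 90, $H^1_{\mathrm{Zar}} = \Pic$, which is locally trivial on any variety) are Zariski-locally trivial. Assembling these via the filtration of $B$ and dévissage for the nonabelian $H^1$ shows the $B$-torsor is Zariski-locally trivial, hence so is $\alpha_G$, which is induced from it by the inclusion $B \hookrightarrow G_{\aff}$.

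An alternative, cleaner route that I would actually prefer: observe directly from (\ref{eqn:prod}) that the composite $G \to G/B \xrightarrow{\ \cong\ } \Alb(G) \times G_{\aff}/B \to \Alb(G)$ equals $\alpha_G$, and that over the big cell $B w_0 B / B \cong \bA^N$ in $G_{\aff}/B$ (translated by elements of $G$) one gets open subsets of $G$ on which $\alpha_G$ trivializes. Concretely, choosing a point in the open $B$-orbit of $G_{\aff}/B$ and using that $G \to G/B$ is itself a Zariski-locally trivial $B$-torsor (as $B$ is split solvable, by the dévissage above), one writes $\alpha_G$ as the composition of two Zariski-locally trivial fibrations over suitable opens, and composition of locally trivial maps is locally trivial. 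Either way, the substantive input is the splitness of $B$ together with (\ref{eqn:prod}).

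For part (ii), the implication ``$\Leftarrow$'' is immediate: a splitting $s : \Alb(G) \to G$ of the exact sequence (\ref{eqn:alb}) is in particular a section of $\alpha_G$ as a torsor, so $\alpha_G$ is trivial. For ``$\Rightarrow$'', suppose $\alpha_G$ is a trivial $G_{\aff}$-torsor, i.e. admits a section $\sigma : \Alb(G) \to G$ with $\alpha_G \circ \sigma = \id$. The issue is that $\sigma$ need not be a group homomorphism, so I would not be done. The fix is to average, or rather to use that $\Alb(G)$ is an abelian variety: consider $\sigma$ normalized so that $\sigma(0) = e_G$ (replace $\sigma$ by $g \mapsto \sigma(0)^{-1}\sigma(g)$, legitimate since $G_{\aff}$ is normal so this is again a section). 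Then the map $\Alb(G) \times \Alb(G) \to G_{\aff}$, $(a,b) \mapsto \sigma(a)\sigma(b)\sigma(a+b)^{-1}$, is a morphism from an abelian variety to an affine variety sending $(0,0)$ to $e_G$; hence it is constant, equal to $e_G$. Thus $\sigma$ is a homomorphism and (\ref{eqn:alb}) splits.

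The main obstacle is the passage from ``Zariski-local triviality as a torsor'' to the concrete statement, and in part (ii) the step from a mere section to a group-theoretic splitting; both are handled by the rigidity principle that morphisms from an abelian variety to an affine variety are constant. In part (i) the only real work is the dévissage reducing torsors under a split solvable group to the abelian cases $\bG_a$, $\bG_m$, where Zariski-local triviality is classical; I would state this as a known fact rather than reprove it.
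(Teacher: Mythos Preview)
Your proof of part (i) is essentially the paper's argument in slightly different clothing: the paper observes that the subgroup $BG_{\ant}\subset G$ maps to $\Alb(G)$ as a $B$-torsor (Lemma \ref{lem:bor}(ii)), while you arrive at the same $B$-torsor by pulling back the obvious section of $G/B\to A$ coming from the product decomposition (\ref{eqn:prod}). Either way, the content is that $\alpha_G$ admits a reduction of structure group to $B$, and $B$-torsors are Zariski-locally trivial because $B$ is split solvable; the paper just cites this last fact (from \cite{Se58}) rather than spelling out the d\'evissage.

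For part (ii), your argument is correct and genuinely different from the paper's. You use only the completeness of $A$: after normalizing a section $\sigma$, the map $(a,b)\mapsto \sigma(a)\sigma(b)\sigma(a+b)^{-1}$ lands in the affine group $G_{\aff}$ and is therefore constant, so $\sigma$ is already a homomorphism. The paper instead identifies $\alpha_G$ with the associated bundle $(G_{\ant}\times G_{\aff})/(G_{\aff}\cap G_{\ant})\to G_{\ant}/(G_{\aff}\cap G_{\ant})$ and shows that a section corresponds to a $G_{\aff}\cap G_{\ant}$-equivariant morphism $G_{\ant}\to G_{\aff}$, which is constant because $\cO(G_{\ant})=k$; this forces $G_{\aff}\cap G_{\ant}$ to be trivial and hence $G\cong G_{\aff}\times G_{\ant}$. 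Your route is more elementary and self-contained (no need for $G_{\ant}$ or the Rosenlicht decomposition); the paper's route extracts the extra structural conclusion $G_{\aff}\cap G_{\ant}=1$ and, more importantly, sets up exactly the template reused in the proof of Proposition \ref{prop:aff} for the affinization map, where completeness is unavailable and anti-affineness is the relevant rigidity.
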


\begin{proof}
(i) The map 
$$
\alpha_{BG_{\ant}} : 
BG_{\ant} \longrightarrow \Alb(BG_{\ant}) \cong \Alb(G)
$$
is a torsor under the connected solvable affine algebraic group
$B$, and hence is locally trivial (see e.g. \cite[Prop.~14]{Se58}). 
By Lemma \ref{lem:bor} (ii), it follows that $\alpha_G$ has local 
sections. Thus, this torsor is locally trivial.

(ii) We may identify $\alpha_G$ with the natural map
$$
(G_{\ant} \times G_{\aff})/(G_{\aff} \cap G_{\ant}) \longrightarrow 
G_{\ant}/(G_{\aff} \cap G_{\ant}),
$$
a homogeneous bundle associated with the torsor
$G_{\ant} \to G_{\ant}/(G_{\aff} \cap G_{\ant})$
and with the $G_{\aff} \cap G_{\ant}$-variety $G_{\aff}$. 
Thus, the sections of $\alpha_G$ are identified with the morphisms 
(of varieties) 
$$
f : G_{\ant} \longrightarrow G_{\aff}
$$
which are $G_{\aff} \cap G_{\ant}$-equivariant. But any such morphism 
is constant, as $G_{\aff}$ is affine and $\cO(G_{\ant}) = k$. Thus, 
if $\alpha_G$ has sections, then $G_{\aff} \cap G_{\ant}$ is trivial, 
since this group scheme acts faithfully on $G_{\aff}$. By the Rosenlicht 
decomposition, it follows that $G \cong G_{\aff} \times G_{\ant}$ and 
$G_{\ant} \cong \Alb(G)$; in particular, (\ref{eqn:alb}) splits.
The converse is obvious. 
\end{proof}

Similarly, we consider the affinization map
$\varphi_G : G \to \Aff(G)$, a torsor under $G_{\ant}$.

\begin{proposition}\label{prop:aff}
{\rm (i)} $\varphi_G$ is locally trivial (for the Zariski topology)
if and only if the group scheme $G_{\aff} \cap G_{\ant}$ is 
smooth and connected. Equivalently, 
$G_{\aff} \cap G_{\ant} = (G_{\ant})_{\aff}$.

\smallskip

\noindent
{\rm (ii)} $\varphi_G$ is trivial if and only if
the torsor $G_{\aff} \to G_{\aff}/(G_{\aff} \cap G_{\ant})$ is trivial.
Equivalently, $G_{\aff} \cap G_{\ant} = (G_{\ant})_{\aff}$
and any character of $G_{\aff} \cap G_{\ant}$ extends to a
character of $G_{\aff}$. 
\end{proposition}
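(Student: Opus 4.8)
The plan is to make the sections of $\varphi_G$ completely explicit via the Rosenlicht decomposition, and then to play the rigidity of abelian varieties off against the fact that $G_{\aff}$ is a rational variety. Throughout I would write $N := G_{\aff}\cap G_{\ant}$, a central subgroup scheme of $G$, and $L := (G_{\ant})_{\aff}$, so that $L\subseteq N$ with finite index; since $L$ is a smooth connected subgroup of finite index in $N$, one checks at once that $N$ is smooth and connected if and only if $N=L$, which reconciles the two forms of the hypotheses in (i) and (ii). As in the proof of Proposition \ref{prop:alb}, the Rosenlicht decomposition identifies $\varphi_G$ with the map $(G_{\ant}\times G_{\aff})/N\to G_{\aff}/N$, $[x,y]\mapsto yN$, where $N$ acts by $z\cdot(x,y)=(zx,z^{-1}y)$. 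Hence, writing $\pi\colon G_{\aff}\to G_{\aff}/N=\Aff(G)$ for the quotient, a section of $\varphi_G$ over an open $V\subseteq\Aff(G)$ is the same as a morphism $F\colon\pi^{-1}(V)\to G_{\ant}$ with $F(yn)=n^{-1}F(y)$ for $n\in N$; and since $\varphi_G$ restricts to $\pi$ on $G_{\aff}\subseteq G$, composing any (local) section of $\pi$ with $G_{\aff}\hookrightarrow G$ yields a (local) section of $\varphi_G$.

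For (i): if $N=L$, then $\pi\colon G_{\aff}\to G_{\aff}/L$ is a torsor under the connected solvable affine group $L$, hence Zariski-locally trivial by \cite[Prop.~14]{Se58}, and the last remark makes $\varphi_G$ locally trivial. Conversely, given a section of $\varphi_G$ over a nonempty open $V$, with associated $F\colon\pi^{-1}(V)\to G_{\ant}$, I would compose with the quotient $q\colon G_{\ant}\to G_{\ant}/L=\Alb(G_{\ant})$: since $qF$ is defined on the dense open $\pi^{-1}(V)$ of the rational variety $G_{\aff}$ and takes values in an abelian variety, it extends to a morphism $G_{\aff}\to\Alb(G_{\ant})$, which is constant. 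Replacing the section by its translate under a suitable element of $G_{\ant}$, I may assume $qF\equiv e$, i.e.\ $F$ factors through $L$. Fixing $y_0\in\pi^{-1}(V)$, the identity $F(y_0n)=n^{-1}F(y_0)$ (as morphisms $N\to G_{\ant}$) then shows, since $F(y_0)\in L$, that the inclusion $N\hookrightarrow G_{\ant}$ factors through $L$; hence $N=L$.

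For (ii): carrying out the same argument with $V=\Aff(G)$ shows $\varphi_G$ is trivial if and only if $\pi$ is trivial — a global section of $\pi$ is one of $\varphi_G$; conversely a global section of $\varphi_G$ gives a global $F\colon G_{\aff}\to G_{\ant}$, which forces $N=L$ as above and, after normalization, $F\colon G_{\aff}\to L$ with $F(y\ell)=\ell^{-1}F(y)$, so that $y\mapsto F(y)^{-1}$ trivializes $\pi$. It then remains to decide when $\pi$ is trivial. If it is, then $G_{\aff}\cong(G_{\aff}/N)\times N$ as varieties, so $N$, being a factor of the variety $G_{\aff}$, is reduced and connected, hence smooth and connected, so $N=L$; and since for the trivial torsor $\pi$ every associated line bundle on $G_{\aff}/N$ is trivial, the standard exact sequence $\X(G_{\aff})\to\X(N)\to\Pic(G_{\aff}/N)$ shows that every character of $N$ extends to $G_{\aff}$. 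Conversely, if $N=L$ and all characters of $N$ extend, I would use that, by the structure of anti-affine groups (see \cite{Sa01}), $L$ is the product of a torus $T$ and a vector group $W$; then $G_{\aff}\to G_{\aff}/W$ is trivial ($G_{\aff}/W$ being affine and $W$ a vector group), so a section of the residual $T$-torsor $G_{\aff}/W\to G_{\aff}/N$ lifts to a section of $\pi$, and that $T$-torsor is trivial because its type homomorphism $\X(T)=\X(N)\to\Pic(G_{\aff}/N)$ vanishes, again by the exact sequence together with the hypothesis.

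The main obstacle will be the last equivalence in (ii): linking triviality of $\pi$ to extendability of characters needs both the structure theorem for anti-affine groups — to guarantee that the unipotent part of $N$ is a vector group, whose torsors over the affine base $G_{\aff}/N$ vanish — and the exact sequence relating the character and Picard groups of the quotient $G_{\aff}\to G_{\aff}/N$. A secondary but pervasive point is that everything must be argued scheme-theoretically, since $N$ may be non-reduced: this is exactly why the correct condition is ``$N$ smooth and connected'', and not merely $N_{\red}=\{e_G\}$.
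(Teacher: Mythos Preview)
Your proof is correct and follows essentially the same route as the paper: identify (local/global) sections of $\varphi_G$ via Rosenlicht with $N$-equivariant maps from $G_{\aff}$ to $G_{\ant}$, use the rationality of $G_{\aff}$ to force any such map into $L=(G_{\ant})_{\aff}$, and thereby reduce everything to the torsor $G_{\aff}\to G_{\aff}/N$. The only cosmetic difference is in the character-extension part of (ii): the paper extracts a $T$-equivariant map $G_{\aff}\to T$ and invokes rigidity to make it a homomorphism, whereas you read the same conclusion off the exact sequence $\X(G_{\aff})\to\X(N)\to\Pic(G_{\aff}/N)$; both arguments are valid.
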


\begin{proof}
(i) We claim that $\varphi_G$ is locally trivial if and only if 
it admits a rational section. Indeed, if 
$\sigma : G/G_{\ant} - \to G$ is such a rational section,
defined at some point $x_0 = \varphi(g_0)$, then the map
$x \mapsto g \sigma(g^{-1} x)$ is another rational section, defined 
at $g x_0$, where $g$ is an arbitrary point of $G$. (Alternatively,
the claim holds for any torsor over a nonsingular variety, as
follows by combining \cite[Lem.~4]{Se58} and \cite[Thm.~2.1]{CO92}).

We now argue as in the proof of Proposition \ref{prop:alb}, and
identify $\varphi_G$ with the natural map 
$$
(G_{\aff} \times G_{\ant})/(G_{\aff} \cap G_{\ant}) \longrightarrow 
G_{\aff}/(G_{\aff} \cap G_{\ant}).
$$
This identifies rational sections of $\varphi_G$ with rational maps
$$
f : G_{\aff} - \longrightarrow G_{\ant}
$$
which are $G_{\aff} \cap G_{\ant}$-equivariant. Such a rational map
descends to a rational map
$$
\bar{f} : G_{\aff}/(G_{\aff} \cap G_{\ant}) - \longrightarrow 
G_{\ant}/(G_{\aff} \cap G_{\ant}) = \Alb(G).
$$
But $G_{\aff}/(G_{\aff} \cap G_{\ant})$ is an affine algebraic group,
and hence is rationally connected. Since $\Alb(G)$ is an abelian 
variety, it follows that $\bar{f}$ is constant; we may assume that
its image is the neutral element. Then $f$ is a rational 
$G_{\aff} \cap G_{\ant}$-equivariant map 
$G_{\aff}  - \to G_{\aff} \cap G_{\ant}$, 
i.e., a rational section of the torsor 
$G_{\aff} \to G_{\aff}/(G_{\aff} \cap G_{\ant})$. 
Clearly, this is only possible if $G_{\aff} \cap G_{\ant}$ is smooth
and connected.

Conversely, if the (affine, commutative) group scheme 
$G_{\aff} \cap G_{\ant}$ is smooth and connected, then the torsor 
$G_{\aff} \to G_{\aff}/(G_{\aff} \cap G_{\ant})$ has rational
sections. By the preceding argument, the same holds for
the torsor $\varphi_G$.

(ii) By the same argument, the triviality of $\varphi_G$ is 
equivalent to that of the torsor 
$G_{\aff} \to G_{\aff}/(G_{\aff} \cap G_{\ant})$, and this implies
the equality $G_{\aff} \cap G_{\ant} = (G_{\ant})_{\aff}$. Write
$(G_{\ant})_{\aff} = TU \cong T \times U$, where $T$ is a torus and $U$ a 
connected commutative unipotent algebraic group. Then a section
of the torsor $G_{\aff} \to G_{\aff}/TU$, being a $TU$-equivariant
map $G_{\aff} \to TU$, yields a $T$-equivariant map 
$f : G_{\aff} \to T$. We may assume that $f(e_{G_{\aff}}) = e_T$. Then 
$f$ is a homomorphism by rigidity, and restricts to the identity on 
$T$. Thus, each character of $T$ extends to a character of $G_{\aff}$.
Any such character must be $U$-invariant, and hence each character of 
$G_{\aff} \cap G_{\ant}$ extends to a character of $G_{\aff}$.

Conversely, assume that each character of $G_{\aff} \cap G_{\ant}$ 
extends to a character of $G_{\aff}$. Then there exists a homomorphism 
$f : G_{\aff} \to T$ that restricts to the identity of $T$. Let $H$
denote the kernel of $f$. Then the multiplication map 
$H \times T \to G_{\aff}$ is an isomorphism; in particular, the torsor
$G_{\aff} \to G_{\aff}/T \cong H$ is trivial. Moreover, $H$ contains
$U$, and the torsor $H \to H/U$ is trivial, since $U$ is connected
and unipotent, and $H/U$ is affine. Thus, the torsor 
$G_{\aff} \to G_{\aff}/TU$ is trivial.
\end{proof}

Next, we show the existence of an isogeny $\pi: \tG \to G$ such that
the torsor $\varphi_{\tG}$ is trivial. 

Following \cite{Me97}, we say that a connected affine algebraic group 
$H$ is \emph{factorial}, if $\Pic(H)$ is trivial; equivalently, the 
coordinate ring $\cO(H)$ is factorial. By \cite[Prop.~1.10]{Me97},
this is equivalent to the derived subgroup of $G/R_u(G)$ (a connected
semi-simple group) being simply connected.

\begin{proposition}\label{prop:cover}
There exists an isogeny $\pi : \tG \to G$, where 
$\tG$ is a connected algebraic group satisfying the following
properties:

\smallskip

\noindent
{\rm (i)} $\pi$ restricts to an isomorphism 
$\tG_{\ant} \cong G_{\ant}$.

\smallskip

\noindent
{\rm (ii)} $\tG_{\aff} \cap \tG_{\ant}$ is smooth and connected.

\smallskip

\noindent
{\rm (iii)} $\Aff(\tG)$ is factorial.

\smallskip

Then $\tG_{\aff}$ is factorial as well. Moreover, the 
$\tG_{\ant}$-torsor $\varphi_{\tG}$ is trivial.
\end{proposition}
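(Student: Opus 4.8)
The plan is to construct $\tG$ as a fibre product that simultaneously makes the semisimple part simply connected and kills the non-smoothness of $G_{\aff}\cap G_{\ant}$, then deduce the last two assertions from the structural propositions already proved.

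First I would recall the Chevalley-type dévissage: set $L:=\Aff(G)$, an affine algebraic group, with $R:=R_u(L)$ its unipotent radical and $S:=L/R$ reductive. Let $S^{sc}\to (S,S)$ be the simply connected cover of the derived group, and form the pullback $\tilde S$ of $S\to S/(S,S)$ along $S^{sc}\times (S,S)$ in the usual way, so that $\tilde S\to S$ is an isogeny with $\tilde S$ factorial (its derived group is $S^{sc}$). Pulling back the extension $1\to R\to L\to S\to 1$ along $\tilde S\to S$ gives an isogeny $\tilde L\to L$ with $\tilde L$ a connected affine group whose reductive quotient is $\tilde S$; by Merkurjev's criterion quoted in the excerpt, $\tilde L=\Aff(\tG)$-to-be is factorial, giving (iii). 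Next, pull back the extension $1\to G_{\ant}\to G\to L\to 1$ along $\tilde L\to L$: this produces an isogeny $\pi:\tG\to G$ with $\tG$ connected, $\Aff(\tG)=\tilde L$, and $\pi$ restricting to an isomorphism on the anti-affine parts (the anti-affine part is insensitive to isogenies of the affine quotient), which is (i). For (ii), I would further replace $\tG$ by a finite cover that trivializes the connected-component/infinitesimal obstruction in $\tG_{\aff}\cap\tG_{\ant}$: since $\tG_{\aff}\cap\tG_{\ant}$ is an affine commutative group scheme and $(\tG_{\ant})_{\aff}$ sits inside it with finite index (Rosenlicht), one can absorb the finite quotient and the infinitesimal part by pulling back along a suitable isogeny of $\tG_{\ant}$ — which by (i) I may perform on $G_{\ant}$ itself — arranging $\tG_{\aff}\cap\tG_{\ant}=(\tG_{\ant})_{\aff}$, smooth and connected. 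I would carry out (iii) first, then (i), then adjust for (ii), checking that each adjustment preserves the previously achieved properties.

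Granting (i)–(iii), the final two claims are short. For "$\tG_{\aff}$ is factorial": by the Rosenlicht decomposition applied to $\tG$, the quotient map $\tG_{\aff}\to\tG_{\aff}/(\tG_{\aff}\cap\tG_{\ant})\cong\Aff(\tG)$ exhibits $\tG_{\aff}$ as a torsor over the factorial group $\Aff(\tG)$ under the smooth connected commutative affine group $\tG_{\aff}\cap\tG_{\ant}=(\tG_{\ant})_{\aff}$; writing $(\tG_{\ant})_{\aff}=T\times U$ with $T$ a torus and $U$ unipotent, the $U$-part of this torsor is trivial (as $U$ is connected unipotent and the base is affine), and the $T$-part is classified by $\Pic$ of the base, which vanishes by factoriality — hence the torsor is trivial, $\tG_{\aff}\cong\Aff(\tG)\times(\tG_{\ant})_{\aff}$, and $\Pic(\tG_{\aff})=\Pic(\Aff(\tG))\times\Pic((\tG_{\ant})_{\aff})=0$ since both factors have trivial Picard group. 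Finally, the $\tG_{\ant}$-torsor $\varphi_{\tG}$ is trivial: by Proposition \ref{prop:aff}(ii) this is equivalent to triviality of the torsor $\tG_{\aff}\to\tG_{\aff}/(\tG_{\aff}\cap\tG_{\ant})$, which we have just established; equivalently, every character of $\tG_{\aff}\cap\tG_{\ant}$ extends to $\tG_{\aff}$, which holds because $\tG_{\aff}\cong\Aff(\tG)\times(\tG_{\ant})_{\aff}$ splits off the intersection as a direct factor.

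The main obstacle I anticipate is the bookkeeping in the construction of $\tG$ — specifically, simultaneously achieving (ii) and (iii) without one spoiling the other, since (iii) is a condition on the affine quotient $\Aff(\tG)$ while (ii) concerns the intersection $\tG_{\aff}\cap\tG_{\ant}$, and both are modified under isogenies. The clean way around this is to do the two pullbacks in the right order: first isogeny-modify the affine quotient (securing (iii), and hence factoriality of $\tG_{\aff}$ once (ii) holds), then isogeny-modify $G_{\ant}$ to fix the intersection (securing (i) and (ii)), checking that an isogeny of the anti-affine part does not disturb the reductive quotient of the affine part. Once the group is in hand, everything else is a direct application of Proposition \ref{prop:aff} and the torsor-splitting argument above, with no further difficulty.
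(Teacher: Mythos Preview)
Your deductions of the final two assertions (factoriality of $\tG_{\aff}$ and triviality of $\varphi_{\tG}$) from (i)--(iii) are correct and essentially match the paper, though you argue slightly more explicitly by exhibiting the splitting $\tG_{\aff}\cong\Aff(\tG)\times(\tG_{\ant})_{\aff}$ whereas the paper just invokes the Picard exact sequence.

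The gap is in your construction of $\tG$, specifically in securing (ii). Your proposed mechanism --- ``pulling back along a suitable isogeny of $\tG_{\ant}$'' --- does not make sense as stated, and any nontrivial isogeny that changes $\tG_{\ant}$ immediately destroys (i), which requires $\pi$ to restrict to an \emph{isomorphism} $\tG_{\ant}\cong G_{\ant}$. You flag this bookkeeping as the main obstacle but never resolve it; reversing the order of the two modifications does not help unless you have an actual construction for (ii) that leaves $G_{\ant}$ untouched. The paper handles this differently and in the opposite order: it first replaces $G$ by the explicit group $(G_{\aff}\times G_{\ant})/(G_{\ant})_{\aff}$ (with $(G_{\ant})_{\aff}$ embedded anti-diagonally), which directly gives (i) and (ii) since the new intersection is exactly $(G_{\ant})_{\aff}$; only then does it pull back along a factorial cover $H\to\Aff(G)$ to obtain (iii). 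The nontrivial point --- which your sketch is missing entirely --- is that this second pullback preserves (ii): the paper proves this by an Albanese argument, showing that the composite $\Alb(G_{\ant})\to\Alb(\tG)\to\Alb(G)$ forces $\Alb(G_{\ant})\to\Alb(\tG)$ to be an isomorphism, which is equivalent to $\tG_{\aff}\cap\tG_{\ant}=(\tG_{\ant})_{\aff}$. Without either the explicit construction or this preservation argument, your proof of the existence of $\tG$ is incomplete.
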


\begin{proof}
Consider the connected algebraic group
$$
\tG := (G_{\aff} \times G_{\ant})/(G_{\ant})_{\aff},
$$
where $(G_{\ant})_{\aff}$ is embedded in $G_{\aff} \times G_{\ant}$
via $z \mapsto (z,z^{-1})$. By the Rosenlicht decomposition, the 
natural map $\tG \to G$ is an isogeny, and induces an isomorphism 
$\tG_{\ant} \to G_{\ant}$. Moreover, 
$\tG_{\aff} \cap \tG_{\ant} \cong (G_{\ant})_{\aff}$ is smooth and 
connected. Replacing $G$ with $\tG$, we may thus assume that
(i) and (ii) already hold for $G$.

Next, since $\Aff(G)$ is a connected affine algebraic group,
there exists an isogeny $p : H \to \Aff(G)$, where $H$ is a 
connected factorial affine algebraic group (see
\cite[Prop.~4.3]{FI73}). The pull-back under $p$ of the 
extension (\ref{eqn:aff}) yields an extension
$$
1 \longrightarrow G_{\ant} \longrightarrow \tG 
\longrightarrow H \longrightarrow 1,
$$
where $\tG$ is an algebraic group equipped with an
isogeny $\pi : \tG \to G$. Clearly, $\tG$ is connected and satisfies
(i) and (iii) (since $\Aff(\tG) = H$). To show (ii), note that
$$
\Alb(G_{\ant}) = G_{\ant}/(G_{\ant})_{\aff} =
G_{\ant}/(G_{\ant}\cap G_{\aff})
$$
and hence the natural map $\Alb(G_{\ant}) \to \Alb(G)$
is an isomorphism. Since that map is the composite
$$
\Alb(G_{\ant}) \longrightarrow \Alb(\tG) \longrightarrow \Alb(G)
$$
induced by the natural maps $G_{\ant} \to \tG \to G$, 
and the map $\Alb(\tG) \to \Alb(G)$ is an isogeny, it follows 
that the map $\Alb(G_{\ant}) \to \Alb(\tG)$ is an isomorphism; 
this is equivalent to (ii).

To show that $\tG_{\aff}$ is factorial, note that 
$\tG_{\aff} \cap \tG_{\ant}$ is a connected commutative affine 
algebraic group, and hence is factorial. Moreover, the exact 
sequence
$$
1 \longrightarrow \tG_{\aff} \cap \tG_{\ant} \longrightarrow 
\tG_{\aff} \longrightarrow \Aff(\tG) \longrightarrow 1
$$
yields an exact sequence of Picard groups
$$ 
\Pic(\tG_{\aff} \cap \tG_{\ant}) \longrightarrow \Pic( \tG_{\aff})
\longrightarrow \Pic(\Aff(\tG))
$$
(see e.g. \cite[Prop.~3.1]{FI73}) which implies our assertion.

Finally, to show that $\varphi_{\tG}$ is trivial, write 
$\tG_{\aff} \cap \tG_{\ant} = TU$ as in the proof of Proposition
\ref{prop:aff}. Arguing in that proof, it suffices to show that
the $T$-torsor $\tG_{\aff}/U \to \Aff(\tG)$ is trivial. But this
follows from the factoriality of $\Aff(\tG)$.
\end{proof}

\begin{remarks}
(i) The commutative group $H^1\big(\Aff(G),G_{\ant}\big)$,
that classifies the isotrivial $G_{\ant}$-torsors over $\Aff(G)$, 
is torsion. Indeed, there is an exact sequence
$$
H^1\big(\Aff(G),(G_{\ant})_{\aff}\big) \longrightarrow
H^1\big(\Aff(G),G_{\ant}\big) \longrightarrow 
H^1\big(\Aff(G),\Alb(G_{\ant})\big)
$$
(see \cite[Prop.~13]{Se58}). Moreover, 
$H^1\big(\Aff(G),\Alb(G_{\ant})\big)$ is torsion by
\cite[Lem.~7]{Se58}, and $H^1\big(\Aff(G),(G_{\ant})_{\aff}\big)$ 
is torsion as well, since $\Aff(G)$ is an affine variety with 
finite Picard group.

\smallskip

\noindent
(ii) One may ask whether there exists an isogeny $\pi : \tG \to G$
such that the torsor $\alpha_{\tG}$ is trivial. The answer is 
affirmative when $k$ is the algebraic closure of a finite field: 
indeed, by a theorem of Arima (see \cite{Ar60}), there exists an 
isogeny $G_{\aff} \times A \to G$ where $A$ is an abelian variety.
However, the answer is negative over any other field: indeed,
there exists an anti-affine algebraic group $G$, extension of
an elliptic curve by $\bG_m$ (see e.g. \cite[Ex.~3.11]{Br09a}). 
Then $\tG$ is anti-affine as well, for any isogeny
$\pi : \tG \to G$ (see \cite[Lem.~1.4]{Br09a}) and hence the map
$\alpha_{\tG}$ cannot be trivial.
\end{remarks}

\subsection{Chow ring}
\label{subsec:cr}
The aim of this subsection is to describe the Chow ring of the 
connected algebraic group $G$ in terms of those of $A := \Alb(G)$ 
and of $\cB := G_{\aff}/B$, the flag variety of $G_{\aff}$. 
For this, we need some preliminary results on characteristic 
homomorphisms.
  
We denote the character group of $G_{\aff}$ by $\X(G_{\aff})$. 
The $G_{\aff}$-torsor $\alpha_G: G \to A$
yields a \emph{characteristic homomorphism}
\begin{equation}\label{eqn:charg}
\gamma_A : \X(G_{\aff}) \longrightarrow \Pic(A)
\end{equation}
which maps any character to the class of the associated line 
bundle over $A$. Likewise, we have the characteristic homomorphism
\begin{equation}\label{eqn:charb}
c_A : \X(B) \longrightarrow \Pic(A)
\end{equation}
associated with the $B$-torsor 
$\alpha_{BG_{\ant}} : BG_{\ant} \to A$.

\begin{lemma}\label{lem:alg}
The image of $c_A$ is contained in $\Pic^0(A)$, and contains the
image of $\gamma_A$ as a subgroup of finite index.   
\end{lemma}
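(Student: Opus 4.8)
The plan is to compare the two torsors giving rise to $c_A$ and $\gamma_A$, exploiting that they have the same base $A$ but structure groups $B$ and $G_{\aff}$ respectively. First I would recall that $B$ is a Borel subgroup of the connected affine group $G_{\aff}$, so the inclusion $B \hookrightarrow G_{\aff}$ induces a map $\X(G_{\aff}) \to \X(B)$ on character groups. Since $G_{\aff}$ is generated by $B$ and the unipotent radical of an opposite Borel, every character of $G_{\aff}$ is determined by its restriction to $B$, so this restriction map $\X(G_{\aff}) \to \X(B)$ is injective; moreover its image has finite index, because the quotient $\X(B)/\X(G_{\aff})$ is a subgroup of the (finite) character group of the maximal torus modulo the restrictions of $\X(G_{\aff})$ — concretely, $[\X(B):\X(G_{\aff})]$ is the order of the fundamental group of the semisimple quotient, hence finite. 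The key compatibility is that $\alpha_{BG_{\ant}}: BG_{\ant} \to A$ is obtained from $\alpha_G : G \to A$ by reduction of structure group from $G_{\aff}$ to $B$ (this is exactly Lemma \ref{lem:bor}(ii), which identifies $(BG_{\ant})_{\aff} = B$ and $\Alb(BG_{\ant}) \cong \Alb(G) = A$). Therefore the square relating $c_A$ and $\gamma_A$ through the restriction map $\X(G_{\aff}) \to \X(B)$ commutes: $\gamma_A = c_A \circ (\text{restriction})$. Since restriction is injective with finite cokernel, the image of $\gamma_A$ sits inside the image of $c_A$ with finite index.

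Next I would establish that the image of $c_A$ lands in $\Pic^0(A)$. The point is that a character $\chi \in \X(B)$ produces a line bundle $L_\chi$ on $A$ associated with the $B$-torsor $BG_{\ant} \to A$; its total space is $(BG_{\ant} \times \bA^1)/B$. I would argue that this line bundle is algebraically trivial, i.e. lies in $\Pic^0(A)$, as follows. The torsor $BG_{\ant} \to A$ is, up to finite-index subtleties, the torsor $G_{\ant} \to G_{\ant}/(G_{\ant}\cap G_{\aff}) \cong A$ pushed along a character of $G_{\ant}\cap G_{\aff}$; but $G_{\ant}$ is anti-affine, so any line bundle on $A$ pulled back to $G_{\ant}$ and then pushed back down must be algebraically trivial. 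More directly: the family of line bundles $\{t^*_a L_\chi\}_{a\in A}$ of translates is parametrized by the connected variety $A$, so all translates are algebraically equivalent to $L_\chi$; on the other hand $A$ acts on $BG_{\ant}$ by translations covering its action on itself (because $G_{\ant}$ is central and $BG_{\ant}/B \cong A$ is a homogeneous space for the group $BG_{\ant}$), so $L_\chi$ is translation-invariant up to isomorphism, hence $L_\chi \in \Pic^0(A)$ by the standard characterization of $\Pic^0$ as the translation-invariant line bundles. An even cleaner route: by Lemma \ref{lem:bor}(iii) we have $G/B \cong A \times G_{\aff}/B$, and $A^*(G)$ is a quotient of $A^*(A) \otimes A^*(G_{\aff}/B)$; pulling $L_\chi$ back to $G$ along $\alpha_G$ it becomes trivial on fibres and must come from a divisor class on $A$ that dies in $A^*(G)$, forcing it into the kernel of $\Pic(A) \to \Pic(G)$, which is contained in $\Pic^0(A)$.

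The main obstacle I anticipate is the second assertion — that $\mathrm{im}(\gamma_A) \subset \mathrm{im}(c_A)$ is finite index — being stated cleanly at the level of the torsors rather than just the character groups. Injectivity and finite cokernel of $\X(G_{\aff}) \to \X(B)$ is standard structure theory of reductive (or general connected affine) groups, but I must be careful that $G_{\aff}$ need not be reductive: its unipotent radical $R_u(G_{\aff})$ has no nontrivial characters and is contained in $B$, so $\X(G_{\aff}) = \X(G_{\aff}/R_u(G_{\aff}))$ and $\X(B) = \X(B/R_u(G_{\aff}))$, reducing to the reductive case where the index equals the order of the fundamental group of the derived group — visibly finite. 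Once the commutative triangle $\gamma_A = c_A \circ \mathrm{res}$ is set up, the finite-index statement is immediate: $\mathrm{im}(\gamma_A) = c_A(\mathrm{im}\,\mathrm{res})$ and $[\mathrm{im}(c_A) : c_A(\mathrm{im}\,\mathrm{res})]$ divides $[\X(B):\mathrm{im}\,\mathrm{res}] < \infty$. So really the only thing requiring genuine care is verifying the functoriality of characteristic homomorphisms under reduction of structure group, which is routine but worth spelling out.
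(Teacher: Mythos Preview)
Your argument for the first assertion via translation-invariance is correct and essentially equivalent to the paper's: the paper reduces to the semi-abelian case by quotienting out the unipotent part $U$ of $B$, then invokes the classical fact for semi-abelian varieties, which ultimately rests on the same homogeneity observation you make. (Your ``cleaner route'' via the kernel of $\Pic(A)\to\Pic(G)$ is circular, though, since that kernel is identified with $\mathrm{im}(\gamma_A)$ only in Proposition~\ref{prop:pic}, and its containment in $\Pic^0(A)$ is deduced from the present lemma in Corollary~\ref{cor:ns}.)

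The second assertion, however, has a genuine gap. Your claim that the restriction $\X(G_{\aff})\to\X(B)$ has finite cokernel is false. Take $G_{\aff}=\SL(2)$: then $\X(G_{\aff})=0$ while $\X(B)\cong\X(T)\cong\bZ$, so the cokernel is infinite. More generally, the exact sequence (\ref{eqn:picgaff}) shows that $\X(B)/\X(G_{\aff})$ embeds into $\Pic(\cB)\cong\bZ^r$ with $r$ the semisimple rank; you have confused this quotient with $\Pic(G_{\aff})$, which is the \emph{cokernel} of $\X(B)\to\Pic(\cB)$ and is indeed the fundamental group of the semisimple quotient. So from $\gamma_A=c_A\circ\mathrm{res}$ you only get $\mathrm{im}(\gamma_A)\subset\mathrm{im}(c_A)$, not finite index.

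The missing idea is that both $c_A$ and $\gamma_A$ factor through a \emph{smaller} character group. The paper observes that the $B$-torsor $BG_{\ant}\to A$ and the $G_{\aff}$-torsor $G\to A$ are both induced from the $(G_{\aff}\cap G_{\ant})$-torsor $G_{\ant}\to A$, so both characteristic maps factor through
\[
\sigma_A:\X(G_{\aff}\cap G_{\ant})\longrightarrow\Pic(A)
\]
via the restrictions $u:\X(G_{\aff})\to\X(G_{\aff}\cap G_{\ant})$ and $v:\X(B)\to\X(G_{\aff}\cap G_{\ant})$. Now $v$ is surjective (since $G_{\aff}\cap G_{\ant}\subset B$ by Lemma~\ref{lem:bor}), so $\mathrm{im}(c_A)=\mathrm{im}(\sigma_A)$; and $u$ has finite cokernel because $G_{\aff}\cap G_{\ant}$ is \emph{central} in $G_{\aff}$. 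This last point is exactly what makes the argument work and what your approach via $\X(B)$ cannot see: the character group of the center is close to $\X(G_{\aff})$, whereas $\X(B)$ is not.
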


\begin{proof}
The first assertion is well known in the case that $B$ is a torus,
i.e., $BG_{\ant}$ is a semi-abelian variety; see e.g.
\cite[VII.3.16]{Se59}. The general case reduces to that one as 
follows: we have $B = T U$, where $U$ denotes the unipotent part 
of $B$, and $T$ is a maximal torus. Then $U$ is a normal subgroup 
of $BG_{\ant}$ and the quotient group 
$H := (BG_{\ant})/U$ is a semi-abelian variety. 
Moreover, $\alpha_{BG_{\ant}}$ factors as the $U$-torsor 
$BG_{\ant} \to H$ followed by the $T$-torsor $\alpha_H: H \to A$,
and $c_A$ has the same image as the characteristic homomorphism 
$\X(T) \to \Pic(A)$ associated with the torsor $\alpha_H$,
under the identification $\X(B) \cong \X(T)$. 

To show the second assertion, consider the natural map 
$G_{\ant} \to A$, a torsor under $G_{\ant} \cap G_{\aff}$, 
and the associated homomorphism
$$
\sigma_A : \X(G_{\ant} \cap G_{\aff}) \longrightarrow \Pic(A).
$$
Then $\gamma_A$ is the composite map
$$
\CD
\X(G_{\aff}) @>{u}>> \X(G_{\ant} \cap G_{\aff}) 
@>{\sigma_A}>> \Pic(A)
\endCD
$$
and likewise, $c_A$ is the composite map
$$
\CD
\X(B) @>{v}>> \X(G_{\ant} \cap G_{\aff}) 
@>{\sigma_A}>> \Pic(A)
\endCD
$$
where $u$, $v$ denote the restriction maps. Moreover, $v$ is 
surjective, and $u$ has a finite cokernel since 
$G_{\ant} \cap G_{\aff} \subset C(G_{\aff})$.
\end{proof}

Similarly, the $B$-torsor $G_{\aff} \to G_{\aff}/B$ yields a 
homomorphism
$$
c_{\cB} : \X(B) \longrightarrow \Pic(\cB)
$$
that fits into an exact sequence
\begin{equation}\label{eqn:picgaff}
\CD
0 \longrightarrow \X(G_{\aff}) \longrightarrow \X(B) 
@>{c_{\cB}}>> \Pic(\cB) 
\longrightarrow \Pic(G_{\aff}) \longrightarrow 0
\endCD
\end{equation}
(see \cite[Prop.~3.1]{FI73}). More generally, the Chow ring 
$\A^*(G_{\aff})$ is the quotient of $\A^*(\cB)$ by the ideal 
generated by the image of $c_{\cB}$ (see \cite[p.~21]{Gr58}).
We now generalize this presentation to $\A^*(G)$:

\begin{theorem}\label{thm:chow}
With the notation and assumptions of this subsection,
there is an isomorphism of graded rings
\begin{equation}\label{eqn:chow}
\A^*(G) \cong \big( \A^*(A) \otimes \A^*(\cB) \big)/I,
\end{equation}
where $I$ denotes the ideal generated by the image of the map
$$
(c_A,c_{\cB}) : \X(B) \longrightarrow \Pic(A) \times \Pic(\cB) 
\cong \A^1(A) \otimes 1 + 1 \otimes \A^1(\cB).
$$ 
\end{theorem}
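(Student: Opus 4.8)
The plan is to build the isomorphism (\ref{eqn:chow}) by exhibiting $\A^*(G)$ as a successive quotient, combining the product structure from Lemma \ref{lem:bor}(iii) with the Grothendieck presentation of $\A^*(G_{\aff})$ recalled above. First I would apply Lemma \ref{lem:bor}(iii), which gives $G/B \cong A \times \cB$. Since $A \times \cB$ is a product of an abelian variety with a cellular (paved by affines) variety $\cB$, the exterior product map induces an isomorphism $\A^*(G/B) \cong \A^*(A) \otimes \A^*(\cB)$; this is a standard fact about Chow rings of products with a variety having a cellular decomposition (as in the linear case treated by Grothendieck), and I would invoke it directly. So the target ring $\big(\A^*(A) \otimes \A^*(\cB)\big)/I$ is just $\A^*(G/B)/I'$ where $I'$ is the ideal generated by the image of $(c_A, c_{\cB})$ viewed inside $\A^1(G/B)$.

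Next I would analyze the projection $q : G \to G/B$, which is a $B$-torsor with $B$ connected solvable, hence an iterated affine-space/line-bundle bundle (a "special" group). For such a torsor the pullback $q^* : \A^*(G/B) \to \A^*(G)$ is surjective, and its kernel is precisely the ideal generated by the Chern classes of the line bundles attached to the characters of $B$ — equivalently, by the image of the composite $\X(B) \xrightarrow{c_{G/B}} \Pic(G/B) \to \A^1(G/B)$, where $c_{G/B}$ is the characteristic homomorphism of the $B$-torsor $q$. This is exactly the mechanism behind Grothendieck's presentation of $\A^*(G_{\aff})$ from $\A^*(\cB)$, now applied one level up; I would either cite that argument or reprove it quickly using that a $\bG_a$-torsor leaves Chow groups unchanged and a $\bG_m$-torsor (line bundle complement) kills the first Chern class.

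The key computational step, and the one I expect to be the main obstacle, is identifying the characteristic homomorphism $c_{G/B} : \X(B) \to \Pic(G/B)$ of the torsor $q : G \to G/B$ with the map $(c_A, c_{\cB})$ under the decomposition $\Pic(G/B) = \Pic(A \times \cB)$. Here I would use the commutative diagram of torsors: the $B$-torsor $G \to G/B$ restricts over the "slice" $\{a\} \times \cB \cong G_{\aff}/B$ (or rather over $BG_{\ant}/B \cong A$) to recover the $B$-torsors $G_{\aff} \to \cB$ and $\alpha_{BG_{\ant}} : BG_{\ant} \to A$ respectively. More precisely, pulling $q$ back along the inclusion $\cB \hookrightarrow G/B$ gives the torsor $G_{\aff} \to \cB$ whose characteristic homomorphism is $c_{\cB}$, and pulling back along $A \cong G_{\ant}/(G_{\ant}\cap G_{\aff}) \hookrightarrow G/B$ gives (a torsor isomorphic to) $\alpha_{BG_{\ant}}$, with characteristic homomorphism $c_A$. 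Since $\Pic(A\times\cB)$ splits as $\Pic(A) \oplus \Pic(\cB)$ (using $\Pic(\cB)$ free and $\cB$ having no nonconstant units, plus the seesaw principle), a class in $\Pic(A\times\cB)$ is determined by its two restrictions, so $c_{G/B}(\chi) = (c_A(\chi), c_{\cB}(\chi))$ for every $\chi \in \X(B)$. Combining this with the surjectivity of $q^*$ and the description of its kernel yields (\ref{eqn:chow}); the ring structure is automatic since $q^*$ is a ring homomorphism and $I = \ker q^*$.

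One point requiring care: I must check that the subring $\A^1(A)\otimes 1 + 1\otimes\A^1(\cB)$ really is all of $\A^1(G/B) = \Pic(G/B)$, i.e. that there are no "mixed" divisor classes on $A \times \cB$; this follows because $\cB$ is rational with $\Pic(\cB)$ finitely generated and torsion-free and $\cO(\cB)^* = k^*$, so the external product $\Pic(A)\times\Pic(\cB)\to\Pic(A\times\cB)$ is an isomorphism (seesaw). With that in hand the identification of $I$ with the stated ideal is literally a translation, and the proof is complete.
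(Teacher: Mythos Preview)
Your proposal is correct and follows essentially the same route as the paper: use Lemma \ref{lem:bor}(iii) to identify $G/B \cong A \times \cB$, apply a K\"unneth-type result to get $\A^*(G/B) \cong \A^*(A)\otimes\A^*(\cB)$, invoke the Grothendieck presentation $\A^*(G)\cong\A^*(G/B)/\langle \mathrm{im}\, c_{G/B}\rangle$, and identify $c_{G/B}$ with $(c_A,c_{\cB})$. The paper is considerably terser---it cites \cite{FMSS95} for the product decomposition and simply asserts the identification of $c_{G/B}$---whereas you spell out the slice-restriction and seesaw argument, but there is no substantive difference in strategy.
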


\begin{proof}
Let $c_{G/B}: \X(B) \to \Pic(G/B)$ denote the characteristic 
homomorphism. Then, as in \cite[p.~21]{Gr58}, we obtain that
$\A^*(G) \cong \A^*(G/B)/J$,
where the ideal $J$ is generated by the image of $c_{G/B}$.
But $G/B \cong A \times \cB$ by Lemma \ref{lem:bor} (iii). 
Moreover, the natural map
$\A^*(A) \otimes \A^*(\cB) \longrightarrow \A^*(A \times \cB)$
is an isomorphism, as follows e.g. from \cite[Thm.~2]{FMSS95}). 
This identifies $\Pic(G/B)$ with $\Pic(A) \times \Pic(\cB)$,
and $c_{G/B}$ with $(c_A,c_{\cB})$.
\end{proof}

The rational Chow ring $\A^*(G)_{\bQ}$ admits a simpler presentation,
which generalizes the isomorphism $\A^*(G_{\aff})_{\bQ} \cong \bQ$:

\begin{proposition}\label{prop:chowrat}
With the notation and assumptions of this subsection, the pull-back
under $\alpha_G$ yields an isomorphism
$$
\A^*(G)_{\bQ} \cong \A^*(A)_{\bQ}/J_{\bQ},
$$ 
where $J$ denotes the ideal of $\A^*(A)$ generated by the image of 
$\gamma_A$, i.e., by Chern classes of $G_{\aff}$-homogeneous line 
bundles. 
\end{proposition}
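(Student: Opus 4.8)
The plan is to deduce this from the integral presentation of Theorem~\ref{thm:chow}: after tensoring with $\bQ$, the flag-variety factor $\A^*(\cB)$ collapses to $\bQ$, leaving only $\A^*(A)_{\bQ}$ modulo the characteristic ideal. First I would recall from the proof of Theorem~\ref{thm:chow} that the isomorphism $\A^*(G) \cong \big(\A^*(A) \otimes \A^*(\cB)\big)/I$ is induced by pull-back along the projection $\pi : G \to G/B$, where $G/B$ is identified with $A \times \cB$ via Lemma~\ref{lem:bor}(iii) and $I$ is the ideal generated by the image of $(c_A, c_{\cB})$. Since $\alpha_G$ is the composite of $\pi$ with the first projection $G/B = A \times \cB \to A$ (as one checks from the construction of the isomorphism in Lemma~\ref{lem:bor}(iii)), the pull-back $\alpha_G^*$ becomes, after $\otimes\,\bQ$, the composite
\[
\A^*(A)_{\bQ} \longrightarrow \A^*(A)_{\bQ} \otimes \A^*(\cB)_{\bQ} \longrightarrow \big(\A^*(A)_{\bQ} \otimes \A^*(\cB)_{\bQ}\big)/I_{\bQ} \cong \A^*(G)_{\bQ},
\]
the first arrow being $a \mapsto a \otimes 1$. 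Writing $R := \A^*(A)_{\bQ}$ and $S := \A^*(\cB)_{\bQ}$, it thus suffices to prove that $(R \otimes S)/I_{\bQ} \cong R/J_{\bQ}$ compatibly with the map $a \mapsto a \otimes 1$.

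The next step is a first collapse, of the $A$-direction. As is implicit in the proof of Lemma~\ref{lem:alg}, $\gamma_A$ factors as the restriction $\X(G_{\aff}) \to \X(B)$ followed by $c_A$, and by exactness of $(\ref{eqn:picgaff})$ this same restriction lands in $\ker c_{\cB}$. Hence for $\chi \in \X(G_{\aff})$ the generator $c_A(\chi|_B) \otimes 1 + 1 \otimes c_{\cB}(\chi|_B)$ of $I$ is simply $\gamma_A(\chi) \otimes 1$, so the ideal $J_{\bQ} \otimes S$ of $R \otimes S$ (generated by $\operatorname{im}(\gamma_A) \otimes 1$) is contained in $I_{\bQ}$. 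Passing to the quotient yields a surjection $(R/J_{\bQ}) \otimes S \twoheadrightarrow (R \otimes S)/I_{\bQ}$ whose kernel is the ideal generated by the elements $[c_A(\chi)] \otimes 1 + 1 \otimes c_{\cB}(\chi)$, $\chi \in \X(B)$. Now Lemma~\ref{lem:alg} says $\operatorname{im}(c_A)$ contains $\operatorname{im}(\gamma_A)$ with finite index, so $c_A(\chi) \in J_{\bQ}$, hence $[c_A(\chi)] = 0$ in $R/J_{\bQ}$; thus this kernel is just the ideal of $(R/J_{\bQ}) \otimes S$ generated by $\{1 \otimes c_{\cB}(\chi) : \chi \in \X(B)\}$, namely $(R/J_{\bQ}) \otimes \langle \operatorname{im}(c_{\cB}) \rangle$, where $\langle \operatorname{im}(c_{\cB}) \rangle$ denotes the ideal of $S$ generated by $\operatorname{im}(c_{\cB})$.

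The final step is a second collapse, of the $\cB$-direction. By Grothendieck's presentation $\A^*(G_{\aff}) = \A^*(\cB)/\langle \operatorname{im}(c_{\cB}) \rangle$ (see \cite[p.~21]{Gr58}), together with $\A^*(G_{\aff})_{\bQ} \cong \bQ$, the graded ideal $\langle \operatorname{im}(c_{\cB}) \rangle$ of $S$ is precisely the augmentation ideal $S^{>0}$. Therefore
\[
(R \otimes S)/I_{\bQ} \;\cong\; \big((R/J_{\bQ}) \otimes S\big)\big/\big((R/J_{\bQ}) \otimes S^{>0}\big) \;\cong\; (R/J_{\bQ}) \otimes (S/S^{>0}) \;=\; R/J_{\bQ},
\]
and, chasing the identifications, this isomorphism carries the class of $a \otimes 1$ to $[a]$. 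Since $\alpha_G^*$ sends $a$ to the class of $a \otimes 1$, we conclude that $\alpha_G^*$ is surjective with kernel $J_{\bQ}$ and induces the asserted isomorphism. The step I expect to cause the most trouble is the bookkeeping in the first paragraph — verifying that the abstract isomorphism of Theorem~\ref{thm:chow} genuinely intertwines $\alpha_G^*$ with $a \mapsto a \otimes 1$ — together with making sure the two collapses are honest equalities of ideals and not merely surjections. This last point is exactly where rational coefficients are essential: via $\A^*(G_{\aff})_{\bQ} \cong \bQ$ for the $\cB$-direction and via Lemma~\ref{lem:alg} for the $A$-direction; it is also what makes this presentation strictly simpler than the integral one of Theorem~\ref{thm:chow}.
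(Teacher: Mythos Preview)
Your argument is correct and, in fact, somewhat cleaner than the paper's. Both proofs start from Theorem~\ref{thm:chow} and both use Lemma~\ref{lem:alg} at the key moment to identify $\operatorname{im}(c_A)_{\bQ}$ with $\operatorname{im}(\gamma_A)_{\bQ}$, but they diverge in how they collapse the flag-variety factor. The paper invokes the Borel presentation $\A^*(\cB)_{\bQ} \cong S_{\bQ} \otimes_{S^W_{\bQ}} \bQ$ (with $S$ the symmetric algebra of $\X(T)$ and $W$ the Weyl group), rewrites $\A^*(G)_{\bQ}$ as $\A^*(A)_{\bQ} \otimes_{S^W_{\bQ}} \bQ = \A^*(A)_{\bQ}/K$, then appeals to a lemma of Vistoli \cite[Lem.~1.3]{Vi89} to show $K$ is generated by Chern classes of $G_{\aff}$-homogeneous \emph{vector} bundles, and finally reduces to line bundles via a filtration argument. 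You bypass all of this machinery by using only the consequence $\A^*(G_{\aff})_{\bQ} \cong \bQ$ (stated just before the proposition), which immediately forces $\langle\operatorname{im}(c_{\cB})\rangle_{\bQ}$ to be the full augmentation ideal of $\A^*(\cB)_{\bQ}$. Your route is more elementary and self-contained; the paper's route, by contrast, makes the Weyl-invariant ring $S^W$ explicit, which is the formalism that carries over to the description of $\A^*(G/H)_{\bQ}$ in Theorem~\ref{thm:chowrat}.
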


\begin{proof}
Choose again a maximal torus $T \subset B$, with Weyl group
$$
W := N_{G_{\aff}}(T)/C_{G_{\aff}}(T). 
$$
Let $S$ denote the symmetric algebra of the character group 
$\X(T) \cong \X(B)$. Then Theorem \ref{thm:chow} yields an isomorphism
$$
\A^*(G) \cong \big( \A^*(A) \otimes \A^*(\cB) \big)\otimes_S \bZ,
$$
where $\A^*(A)$ is an $S$-module via $c_A$, and likewise for 
$\A^*(\cB)$; the map $S \to \bZ$ is of course the quotient by the 
maximal graded ideal. Moreover, $c_{\cB}$ induces an isomorphism
$\A^*(\cB)_{\bQ} \cong S_{\bQ} \otimes_{S^W_{\bQ}} \bQ$,
where $S^W$ denotes the ring of $W$-invariants in $S$. This yields
in turn an isomorphism
$$
\A^*(G)_{\bQ} \cong \A^*(A)_{\bQ} \otimes_{S^W_{\bQ}} \bQ 
\cong \A^*(A)_{\bQ}/K,
$$
where $K$ denotes the ideal of $\A^*(A)_{\bQ}$ generated by the image 
of the maximal homogeneous ideal of $S^W_{\bQ}$. In view of 
\cite[Lem.~1.3]{Vi89}, $K$ is also generated by Chern classes of 
$G_{\aff}$-homogeneous vector bundles on $A$. Any such bundle admits
a filtration with associated graded a direct sum of $B$-homogeneous
line bundles, since any finite-dimensional $G_{\aff}$-module has 
a filtration by $B$-submodules, with associated graded a direct sum 
of one-dimensional $B$-modules. Furthermore, by Lemma \ref{lem:alg},
the Chern class of any $B$-homogeneous line bundle is proportional
to that of a $G_{\aff}$-homogeneous line bundle; this completes the
proof.
\end{proof}

\begin{remark}
Assume that $G_{\aff}$ is special, i.e., that any $G_{\aff}$-torsor is 
locally trivial; equivalently, the characteristic homomorphism 
$S \to \A^*(\cB)$ is surjective (see \cite[Thm.~3]{Gr58}). Then, by
the preceding argument, $\A^*(G) = \A^*(A)/K$, where $K$ is 
generated by Chern classes of $G_{\aff}$-homogeneous vector bundles. 
Clearly, $K \supset J$; this inclusion may be strict, as shown by
the following example.

Let $L$ be an algebraically trivial line bundle on an abelian 
variety $A$; then there is an extension
$$
1 \longrightarrow \bG_m = \Aut_A(L) \longrightarrow G_L 
\longrightarrow A \longrightarrow 1,
$$
where $G_L$ is a connected algebraic group contained in 
$\Aut(L)$ (the automorphism group of the variety $L$).
Consider the vector bundle $E := L \oplus L$ over $A$; then
we have an exact sequence
$$
1 \longrightarrow \Aut_A(E) \longrightarrow G 
\longrightarrow A \longrightarrow 1,
$$
where $G$ is a connected algebraic subgroup of 
$\Aut(E)$. Hence $G_{\aff} = \Aut_A(E) \cong \GL(2)$. 
Thus, the subring of $\A^*(A)$ generated by Chern classes 
of $G_{\aff}$-homogeneous vector bundles is also generated 
by the Chern classes of $E$, that is, by $2c_1(L)$ and $c_1(L)^2$.

If that ring is generated by $2c_1(L)$ only, then $c_1(L)^2$
is an integral multiple of $4 c_1(L)^2$, and hence is torsion
in $\A^2(A)$. But this cannot hold for all algebraically trivial
line bundles on a given abelian surface $A$ over the field of
complex numbers. Otherwise, the products $c_1(L) c_1(M)$, where 
$L, M \in \Pic^0(A)$, generate a torsion subgroup of 
$\A^2(A) = \A_0(A)$. But  by \cite[Thm.~A2]{BKL76}, that subgroup 
equals the kernel $T(A)$ of the natural map 
$\A_0(A) \to \bZ \times A$ given by the degree and the sum. 
Moreover, $T(A)$ is non-zero by \cite{Mu69}, and torsion-free 
by \cite{Ro80}. 
\end{remark}

\begin{corollary}\label{cor:deg}
Let $g := \dim(A)$, then $\A^i(G)_{\bQ} = 0$ for all $i > g$, and
$\A^g(G)_{\bQ} \neq 0$.
\end{corollary}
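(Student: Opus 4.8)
The plan is to read off both claims from the rational presentation $\A^*(G)_{\bQ} \cong \A^*(A)_{\bQ}/J_{\bQ}$ established in Proposition \ref{prop:chowrat}, using the fact that $A$ is a complete nonsingular variety of dimension $g$. First I would recall that for the abelian variety $A$ one has $\A^i(A)_{\bQ} = 0$ for $i > g$ and $\A^g(A)_{\bQ} = \A_0(A)_{\bQ} \neq 0$; the vanishing is immediate since an integral cycle of codimension $> g$ on a $g$-dimensional variety is empty, and the nonvanishing follows because the degree map $\A_0(A)_{\bQ} \to \bQ$ is surjective (the class of a point maps to $1$). Since $\A^*(G)_{\bQ}$ is a quotient of $\A^*(A)_{\bQ}$ as a graded ring, we immediately get $\A^i(G)_{\bQ} = 0$ for $i > g$.

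For the nonvanishing of $\A^g(G)_{\bQ}$, the point is to check that the ideal $J_{\bQ}$ generated by the image of $\gamma_A$ does not swallow all of $\A^g(A)_{\bQ}$. Here I would use that $\gamma_A$ lands in $\Pic^0(A)$: indeed the image of $c_A$ lies in $\Pic^0(A)$ by Lemma \ref{lem:alg}, and the image of $\gamma_A$ is contained in (a finite-index subgroup of) that image, hence also in $\Pic^0(A)$ after tensoring with $\bQ$. So $J$ is generated by algebraically trivial divisor classes. The key step is then to observe that products of algebraically trivial divisor classes on an abelian variety, when pushed to the top Chow group and then composed with the degree map, vanish — more precisely, for $\ell \in \Pic^0(A)$ and any cycle class $z$, the class $\ell \cdot z$ has degree zero in $\A_0(A)_{\bQ}$, since intersecting with an algebraically trivial divisor preserves algebraic equivalence and hence does not change the degree-$0$ component, so $\deg(\ell \cdot z) = 0$. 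Therefore the degree homomorphism $\A^g(A)_{\bQ} \to \bQ$ kills the degree-$g$ part of $J_{\bQ}$, and descends to a surjection $\A^g(G)_{\bQ} = \A^g(A)_{\bQ}/J^g_{\bQ} \twoheadrightarrow \bQ$, which forces $\A^g(G)_{\bQ} \neq 0$.

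The main obstacle is making the assertion ``intersecting with an algebraically trivial divisor lowers the degree'' precise and correctly justified; the cleanest route is to note that the degree map $\A_0(A) \to \bZ$ factors through numerical (or algebraic) equivalence, and that for $D \in \Pic^0(A)$ the intersection product $D \cdot (-) \colon \A^{g-1}(A) \to \A^g(A)$ carries every class into the subgroup of degree-zero $0$-cycles, because it can be computed after replacing $D$ by $0$ up to algebraic equivalence without changing degrees. Alternatively one can argue more concretely: any element of $J^g_{\bQ}$ is a $\bQ$-combination of classes $\gamma_A(\chi_1)\cdots\gamma_A(\chi_r)\cdot \beta$ with $r \geq 1$ and $\beta \in \A^{g-r}(A)_{\bQ}$, and $\gamma_A(\chi_1) \cdot \beta' \in \A^*(A)_{\bQ}$ has degree $0$ in codimension $g$ because $c_1$ of an algebraically trivial line bundle lies in the kernel of the cycle-class map to the associated graded of the filtration by algebraic equivalence. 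I would also remark that this top-degree nonvanishing is already implicit in the integral statement $\A^*(G) \cong (\A^*(A)\otimes\A^*(\cB))/I$ of Theorem \ref{thm:chow}, but the rational version gives the shortest proof.
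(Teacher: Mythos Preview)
Your proof is correct and follows essentially the same approach as the paper. The paper's argument is more terse: it simply notes that Proposition~\ref{prop:chowrat} gives $\A^g(G)_{\bQ}$ as the quotient of $\A_0(A)_{\bQ}$ by a subspace consisting of \emph{algebraically trivial} cycle classes (since the ideal $J$ is generated by $\gamma_A(\X(G_{\aff}))\subset\Pic^0(A)$ and the product of an algebraically trivial divisor with any cycle is again algebraically trivial), whereas you phrase the same conclusion in terms of the degree map vanishing on $J^g_{\bQ}$ via the implication ``algebraically trivial $\Rightarrow$ numerically trivial''. Both formulations yield the nonvanishing immediately.
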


\begin{proof}
Proposition \ref{prop:chowrat} yields readily the first assertion;
it also implies that $\A^g(G)_{\bQ}$ is the quotient of 
$\A^g(A)_{\bQ} = \A_0(A)_{\bQ}$ by a subspace consisting of 
algebraically trivial cycle classes.
\end{proof}

\begin{remark}
Likewise, $\A_i(G)=0$ for all $i > \dim(B)$, in view of Theorem
\ref{thm:chow}. This vanishing result also follows from the fact 
that the abelian group $\A_*(G)$ is generated by classes of 
$B$-stable subvarieties (see \cite[Thm.~1]{FMSS95}).
\end{remark}

\subsection{Picard group}
\label{subsec:pg}

By Theorem \ref{thm:chow}, the Picard group of $G$ admits a 
presentation
\begin{equation}\label{eqn:picg}
\CD
\X(B) @>{(c_A,c_{\cB})}>> \Pic(A) \times \Pic(\cB) @>>> 
\Pic(G) @>>>0.
\endCD 
\end{equation}
Another description of that group follows readily from the 
exact sequence of \cite[Prop.~3.1]{FI73} applied to the
locally trivial fibration $\alpha_G : G \to A$ with fibre $G_{\aff}$:

\begin{proposition}\label{prop:pic}
There is an exact sequence
\begin{equation}\label{eqn:pic}
\CD
0 \longrightarrow \X(G) \longrightarrow \X(G_{\aff}) 
@>{\gamma_A}>> \Pic(A) \longrightarrow \Pic(G) 
\longrightarrow \Pic(G_{\aff}) \longrightarrow 0,
\endCD
\end{equation}
where $\gamma_A$ is the characteristic homomorphism (\ref{eqn:charg}), 
and where all other maps are pull-backs.
\end{proposition}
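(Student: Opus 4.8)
The plan is to deduce the exact sequence (\ref{eqn:pic}) from the general machinery for Picard groups of locally trivial fibrations, applied to the fibration $\alpha_G : G \to A$. By Proposition \ref{prop:alb}(i), $\alpha_G$ is a locally trivial $G_{\aff}$-torsor; since $G_{\aff}$ is a connected affine algebraic group, the Rosenlicht-type result of \cite[Prop.~3.1]{FI73} (the same one already invoked in (\ref{eqn:picgaff}) and in the proof of Proposition \ref{prop:cover}) produces, for any locally trivial fibration $f : E \to Y$ with fibre an affine algebraic group $F$, an exact sequence
\begin{equation*}
0 \longrightarrow \X(F) \longrightarrow \X(E|_{\text{fibre}}) \longrightarrow \cdots
\end{equation*}
— more precisely, an exact sequence of the shape $0 \to \X(E) \to \X(F) \to \Pic(Y) \to \Pic(E) \to \Pic(F) \to 0$, where the map $\X(F) \to \Pic(Y)$ is the characteristic homomorphism of the torsor and the remaining maps are the obvious pull-backs (for $\Pic(Y) \to \Pic(E)$) and restriction-to-a-fibre (for $\Pic(E) \to \Pic(F)$). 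Specialising $E = G$, $Y = A$, $F = G_{\aff}$ gives exactly (\ref{eqn:pic}), with $\X(G_{\aff})$ the character group, $\gamma_A$ the characteristic homomorphism (\ref{eqn:charg}) by definition, and $\X(G) = \X(G_{\aff})$-characters that are pulled back from $G$; note $\X(A) = 0$ since $A$ is an abelian variety, which is why the sequence starts with $\X(G)$ rather than with a term involving $\X(A)$.

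First I would recall precisely the statement of \cite[Prop.~3.1]{FI73} and check its hypotheses apply: one needs $\alpha_G$ to be a locally trivial torsor under an affine group (granted by Proposition \ref{prop:alb}(i)), and one needs $A$ to be a normal variety so that the Weil-divisor/line-bundle dictionary and the localisation sequence for Picard groups behave well — $A$ is a smooth projective variety, so this is automatic. Second, I would identify each map in the resulting sequence with the map named in the proposition: the injectivity of $\X(G) \hookrightarrow \X(G_{\aff})$ is the statement that a character of $G$ is determined by its restriction to the fibre, which holds because $G_{\aff}$ meets every fibre (indeed $\alpha_G$ is a $G_{\aff}$-torsor) and $A$ has no nonconstant units beyond scalars; the identification of the connecting map $\X(G_{\aff}) \to \Pic(A)$ with $\gamma_A$ is immediate from the definition of $\gamma_A$ as "associate to a character the line bundle on $A$ obtained by the associated-bundle construction from the torsor $\alpha_G$". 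Third, exactness at $\Pic(A)$ and at $\Pic(G)$ and surjectivity onto $\Pic(G_{\aff})$ all come bundled in the cited proposition; I would just transcribe them.

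There is essentially no obstacle here: the proposition is, as the text says, a formal consequence of a known exact sequence applied to a fibration already shown to be locally trivial. The only point requiring a sentence of care is the left end — verifying that the sequence genuinely begins "$0 \to \X(G) \to \X(G_{\aff}) \to \Pic(A)$" rather than with extra terms — and this is handled by observing $\X(A) = 0$ and $\cO(A)^\times = k^\times$, so that the exact sequence of units/Picard groups for $\alpha_G$ degenerates at its left end exactly as claimed. Accordingly the proof is a two- or three-line citation, which is presumably why the author states it without a displayed proof beyond the remark preceding it.

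Hence the proof reads: apply \cite[Prop.~3.1]{FI73} to the locally trivial fibration $\alpha_G : G \to A$ (Proposition \ref{prop:alb}(i)) with fibre the connected affine group $G_{\aff}$; the resulting exact sequence is (\ref{eqn:pic}), with the connecting homomorphism $\X(G_{\aff}) \to \Pic(A)$ equal to $\gamma_A$ by the definition of the characteristic homomorphism, and with $\X(A) = 0$ accounting for the zero on the left. $\square$
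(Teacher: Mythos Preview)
Your proposal is correct and matches the paper's approach exactly: the paper does not even display a separate proof, stating just before the proposition that it ``follows readily from the exact sequence of \cite[Prop.~3.1]{FI73} applied to the locally trivial fibration $\alpha_G : G \to A$ with fibre $G_{\aff}$'', which is precisely what you do. Your additional remarks about $\X(A)=0$ and the identification of the connecting map with $\gamma_A$ are the natural sanity checks one makes when unpacking that citation.
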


Next, we denote by $\Pic^0(G) \subset \Pic(G)$ the group of 
algebraically trivial divisors modulo rational equivalence, 
and we define the ``N\'eron-Severi'' group of $G$ by
$$
\NS(G) := \Pic(G)/\Pic^0(G).
$$

\begin{corollary}\label{cor:ns}
The exact sequence (\ref{eqn:pic}) induces an exact sequence
\begin{equation}\label{eqn:pic0}
\CD
0 \longrightarrow \X(G) \longrightarrow \X(G_{\aff}) 
@>{\gamma_A}>> \Pic^0(A) 
\longrightarrow \Pic^0(G) \longrightarrow 0
\endCD
\end{equation}
and an isomorphism
\begin{equation}\label{eqn:ns}
\NS(G) \cong \NS(A) \times \Pic(G_{\aff}).
\end{equation}
In particular, the abelian group $\NS(G)$ is finitely generated,
and the pull-back under $\alpha_G$ yields an isomorphism 
\begin{equation}\label{eqn:nsrat}
\NS(G)_{\bQ} \cong \NS(A)_{\bQ}.
\end{equation}
\end{corollary}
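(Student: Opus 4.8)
The plan is to deduce the statement from the exact sequence (\ref{eqn:pic}), from Lemma \ref{lem:alg}, and from the splitting $G/B \cong A \times \cB$ of Lemma \ref{lem:bor}(iii). The key point, which I would establish first as a separate lemma, concerns algebraic equivalence on torsors: \emph{if $f : P \to Y$ is a torsor under a connected solvable linear algebraic group $B$ over a smooth variety $Y$, with characteristic homomorphism $c_Y : \X(B) \to \Pic(Y)$, then pull-back induces an isomorphism $\NS(Y)/\langle c_Y(\X(B)) \rangle \cong \NS(P)$}, the bracket denoting the subgroup of $\NS(Y)$ generated by the image of $c_Y$.

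To prove this lemma I would reduce to $B = \bG_m$: writing $B = T R_u(B)$ with $T$ a maximal torus, the $R_u(B)$-torsor $P \to P/R_u(B)$ is an iterated $\bG_a$-torsor and hence induces an isomorphism on Picard groups, so $\NS(P) \cong \NS(P/R_u(B))$, and then $T$ is a product of copies of $\bG_m$. For a $\bG_m$-torsor, $P$ is the complement of the zero section in the total space $\mathbb{L}$ of the line bundle $L$ of class $c_Y(1)$; as $\mathbb{L} \to Y$ is an affine bundle, $\Pic(\mathbb{L}) \cong \Pic(Y)$ and the zero section has class $[L]$, so the assertion reduces to the elementary fact that if $V$ is open in a smooth variety $W$ with $W \setminus V$ of pure codimension one and irreducible components $E_i$, then $\NS(V) \cong \NS(W)/\langle [E_i] \rangle$. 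This last fact holds because $\Pic(W) \to \Pic(V)$ is surjective with kernel $\sum_i \bZ [E_i]$, and the only extra relation modulo algebraic equivalence comes from spreading out (using the local factoriality of $W$) a family of line bundles on $V$ that witnesses triviality to a family on $W$: its two relevant members are algebraically equivalent on $W$ and differ by a divisor supported on $W \setminus V$.

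Granting the torsor lemma, I would apply it to the $B$-torsor $G \to G/B$. By Lemma \ref{lem:bor}(iii), $G/B \cong A \times \cB$; the product decompositions $\Pic(G/B) = \Pic(A) \oplus \Pic(\cB)$ and $\NS(G/B) = \NS(A) \oplus \Pic(\cB)$ hold as in the proof of Theorem \ref{thm:chow} (using $\Pic^0(\cB) = 0$), and under them $c_{G/B}$ becomes $(c_A, c_{\cB})$. By Lemma \ref{lem:alg} the image of $c_A$ lies in $\Pic^0(A)$, hence dies in $\NS(A)$, so the subgroup of $\NS(G/B) = \NS(A) \oplus \Pic(\cB)$ to be divided out is exactly $0 \oplus c_{\cB}(\X(B))$; and (\ref{eqn:picgaff}) identifies $\Pic(\cB)/c_{\cB}(\X(B))$ with $\Pic(G_{\aff})$. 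This gives the isomorphism $\NS(G) \cong \NS(A) \oplus \Pic(G_{\aff})$ of (\ref{eqn:ns}), and tracing the identifications shows the first summand is the image of $\alpha_G^*$.

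It remains to derive (\ref{eqn:pic0}) and the consequences. Any algebraically trivial line bundle on $G$ restricts trivially to the fibre $G_{\aff}$ of $\alpha_G$ (as $\Pic^0(G_{\aff}) = 0$, $G_{\aff}$ being rational), so by exactness of (\ref{eqn:pic}) it equals $\alpha_G^* M$ for some $M \in \Pic(A)$; by the description of $\NS(G)$ above, $M \in \Pic^0(A)$, whence $\Pic^0(G) = \alpha_G^*(\Pic^0(A))$. Since $\ker(\alpha_G^* : \Pic(A) \to \Pic(G)) = \gamma_A(\X(G_{\aff})) \subseteq \Pic^0(A)$ by (\ref{eqn:pic}) and Lemma \ref{lem:alg}, we get $\Pic^0(G) \cong \Pic^0(A)/\gamma_A(\X(G_{\aff}))$, and inserting this into (\ref{eqn:pic}) yields (\ref{eqn:pic0}). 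Finally $\NS(A)$ is finitely generated (Néron--Severi) and $\Pic(G_{\aff})$ is finite, so $\NS(G)$ is finitely generated, and tensoring (\ref{eqn:ns}) with $\bQ$ kills $\Pic(G_{\aff})$ and leaves the isomorphism $\NS(G)_{\bQ} \cong \NS(A)_{\bQ}$ induced by $\alpha_G^*$. The hard part is the torsor lemma of the first two paragraphs, equivalently the injectivity of $\alpha_G^* : \NS(A) \to \NS(G)$: since $\alpha_G$ need not even admit a rational section over the complement of a codimension-two subset, one cannot argue by a direct restriction, and the affine-bundle geometry of the $\bG_m$-torsors in the solvable filtration is really what is being used.
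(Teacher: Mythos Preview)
Your argument is correct and uses the same ingredients as the paper --- the presentation (\ref{eqn:picg}), Lemma \ref{lem:alg}, and the product decomposition $G/B\cong A\times\cB$ --- but organised differently. The paper first asserts that (\ref{eqn:pic0}) follows from (\ref{eqn:pic}) together with the observations that $\gamma_A(\X(G_{\aff}))\subset\Pic^0(A)$, that $\alpha_G^*$ preserves $\Pic^0$, and that $\Pic^0(G_{\aff})=0$; it then combines (\ref{eqn:pic0}) with (\ref{eqn:picg}) to obtain (\ref{eqn:ns}). You reverse the order: you prove (\ref{eqn:ns}) first via your torsor lemma for $\NS$ under connected solvable $B$-torsors, and then read off (\ref{eqn:pic0}). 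The substantive difference is precisely the point you flag as the ``hard part'': the surjectivity of $\Pic^0(A)\to\Pic^0(G)$ (equivalently the injectivity of $\alpha_G^*:\NS(A)\to\NS(G)$). The paper passes over this in one sentence, whereas you supply a complete argument by reducing to $\bG_m$-torsors, realising them as complements of zero sections, and using the spreading-out fact about $\NS$ of an open subset of a smooth variety. Your route is longer but makes this step fully explicit; the paper's route is shorter but leaves the reader to fill in that justification.
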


\begin{proof}
The image of $\gamma_A$ is contained in $\Pic^0(A)$ by Lemma 
\ref{lem:alg}. Also, note that the pull-back under $\alpha_G$ 
maps $\Pic^0(A)$ to $\Pic^0(G)$; similarly, the pull-back under 
the inclusion $G_{\aff} \subset G$ maps $\Pic^0(G)$ to 
$\Pic^0(G_{\aff}) = 0$. In view of this, the exact sequence 
(\ref{eqn:pic0}) follows from (\ref{eqn:pic}).

Together with (\ref{eqn:picg}), it follows in turn that 
$\NS(G)$ is the quotient of $\NS(A) \times \Pic(\cB)$ by
the image of $c_{\cB}$; this implies (\ref{eqn:ns}).
\end{proof}

Also, note that a line bundle $M$ on $A$ is ample if and only if
$\alpha_G^*(M)$ is ample, as follows from 
\cite[Lem.~XI 1.11.1]{Ra70}. In other words, the isomorphism
(\ref{eqn:nsrat}) identifies both ample cones.

\section{Homogeneous spaces}
\label{sec:hs}

\subsection{Two fibrations}
\label{subsec:tf}

Throughout this section, we fix a \emph{homogeneous variety} 
$X$, i.e., $X$ has a transitive action of the connected algebraic 
group $G$. We choose a point $x \in X$ and denote by $H = G_x$ 
its stabilizer, a closed subgroup scheme of $G$. This identifies 
$X$ with the \emph{homogeneous space} $G/H$; the choice of another
base point $x$ replaces $H$ with a conjugate.

Since $G/G_{\aff}$ is an abelian variety, the product 
$G_{\aff}H \subset G$ is a closed normal subgroup scheme,
independent of the choice of $x$. Moreover, the homogeneous space
$G/G_{\aff} H$ is an abelian variety as well, and the natural map
$$
\alpha_X : X = G/H \longrightarrow G/G_{\aff} H = X/G_{\aff}
$$
is the Albanese morphism of $X$. This is a $G$-equivariant 
fibration with fibre
$$ 
G_{\aff} H/H \cong G_{\aff}/(G_{\aff} \cap H).
$$
If $G$ acts faithfully on $X$, then $H$ is affine in view of
\cite[Lemma, p.~154]{Ma63}. Hence $G_{\aff}$ has finite index 
in $G_{\aff}H$. In other words, the natural map 
$$
G/G_{\aff} = \Alb(G) \longrightarrow \Alb(X) = G/G_{\aff}H
\cong G_{\ant}/(G_{\ant} \cap G_{\aff}H)
$$ 
is an isogeny.

We may also consider the natural map
$$
\varphi_X : X = G/H \longrightarrow G/G_{\ant} H = X/G_{\ant}
\cong G_{\aff}/(G_{\aff} \cap G_{\ant}H).
$$
Note that the central subgroup $G_{\ant}\subset G$ acts 
on $X$ via its quotient 
$$
G_{\ant}/(G_{\ant}\cap H) \cong G_{\ant}H/H,
$$
an anti-affine algebraic group. Moreover, $\varphi_X$
is a torsor under that group. In particular, if $G$ acts 
faithfully on $X$, then $G_{\ant} \cap H$ is trivial, and hence
$\varphi_X$ is a $G_{\ant}$-torsor.

Like the Albanese morphism $\alpha_X$, the map $\varphi_X$ only
depends on the abstract variety $X$: this follows from our
next result, which generalizes \cite[Lem.~2.1]{BP08} (about 
actions of abelian varieties) to actions of anti-affine groups.

\begin{lemma}\label{lem:ant}
Given a variety $Z$, there exists an anti-affine
algebraic group $\Aut_{\ant}(Z)$ of automorphisms of $Z$, 
such that every action of an anti-affine algebraic group 
$\Gamma$ on $Z$ arises from a unique homomorphism 
$\Gamma \to \Aut_{\ant}(Z)$. Moreover, $\Aut_{\ant}(Z)$ centralizes 
any connected group scheme of automorphisms of $Z$.

For a homogeneous variety $X$ as above, we have
$$
\Aut_{\ant}(X) = G_{\ant}/(G_{\ant}\cap H).
$$
\end{lemma}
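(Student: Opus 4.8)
The plan is to establish the general existence of $\Aut_{\ant}(Z)$ by a universality argument, then identify it in the homogeneous case. First I would construct $\Aut_{\ant}(Z)$ as follows. Consider the full automorphism functor of $Z$; although it need not be representable, its connected components that arise from anti-affine groups should be controllable. More precisely, I would take the collection of all pairs $(\Gamma, a)$ where $\Gamma$ is an anti-affine algebraic group acting on $Z$ via $a$, and look for a terminal object. To produce it, recall that by the structure theory (\cite{Sa01, Br09a}) every anti-affine group is a quotient of a product of a semi-abelian variety and a unipotent-type piece, and in any case is commutative; moreover any morphism of varieties from an anti-affine group to an affine one is constant. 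The key mechanism is that if $\Gamma_1, \Gamma_2$ both act on $Z$, then the images in $\Aut(Z)$ generate a subgroup functor, and one shows the actions commute: this is exactly the ``centralizing'' assertion, and it follows because the commutator map $\Gamma_1 \times \Gamma_2 \to \Aut(Z)$ would have to factor through something affine (using that the orbit maps and the rigidity of anti-affine groups force triviality), so it is constant, equal to the identity. Granting that, the collection of anti-affine subgroups of the (ind-)automorphism group forms a directed system under the product (which is again anti-affine, being a quotient of $\Gamma_1 \times \Gamma_2$), and $\Aut_{\ant}(Z)$ is the union — which stabilizes because each such subgroup acts faithfully and $\dim \Gamma$ is bounded by, say, $2\dim Z$ (orbits have dimension $\le \dim Z$ and faithfulness plus commutativity bounds the dimension of a group acting faithfully through its orbit map on a fixed point). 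The universality and uniqueness of $\Gamma \to \Aut_{\ant}(Z)$ are then formal.

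For the ``centralizes any connected group scheme'' clause, let $\cG$ be a connected group scheme acting on $Z$ and $\Gamma = \Aut_{\ant}(Z)$. The commutator pairing $\cG \times \Gamma \to \Aut(Z)$ restricted to $\{e\} \times \Gamma$ and $\cG \times \{e\}$ is trivial; I would argue that for fixed $\gamma \in \Gamma$ the map $\cG \to \Aut(Z)$, $g \mapsto g\gamma g^{-1}\gamma^{-1}$, combined with the action on $Z$, yields a family of automorphisms parametrized by the connected $\cG$; since this family is ``the same'' anti-affine group element deformed, and $\Gamma$ has no nontrivial connected affine (in particular no nontrivial families arising from an affine base in the naive sense) — more carefully, one uses that $\Aut_{\ant}(Z)$ is the maximal anti-affine piece and rigidity of homomorphisms between algebraic groups — to conclude the commutator is constantly the identity. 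This is the step I expect to require the most care, since it hinges on the precise representability/rigidity inputs; the cited \cite[Lem.~2.1]{BP08} handles the abelian-variety case and the anti-affine generalization should run parallel, using that anti-affine groups share the rigidity property $\Hom(\Gamma, H) $ is ``discrete'' for affine $H$ and that morphisms from $\Gamma$ to any variety that are group-like are rigid.

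It remains to compute $\Aut_{\ant}(X)$ for $X = G/H$ homogeneous under connected $G$. The group $G_{\ant}/(G_{\ant}\cap H) \cong G_{\ant}H/H$ acts on $X$, is anti-affine (a quotient of the anti-affine $G_{\ant}$), hence factors through $\Aut_{\ant}(X)$; I would show this map is an isomorphism onto $\Aut_{\ant}(X)$. For injectivity, the action of $G_{\ant}H/H$ on $X=G/H$ is by definition effective on the relevant quotient (or one reduces to the faithful case by replacing $G$ with its image in $\Aut(X)$, under which $H$ becomes affine by \cite[Lem.~p.~154]{Ma63} and $G_{\ant}\cap H$ becomes trivial). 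For surjectivity, let $\Gamma \to \Aut_{\ant}(X)$ be the universal one and consider $\Gamma' := \Gamma \cdot G_{\ant}H/H$ inside $\Aut_{\ant}(X)$, an anti-affine group acting on $X$; by the centralizing property $\Gamma$ commutes with the $G$-action, so $\Gamma$ preserves each $G$-orbit structure — but $G$ is transitive, so an element of $\Gamma$ is determined by where it sends the base point $x$, i.e. $\Gamma \hookrightarrow X$ via $\gamma \mapsto \gamma x$; since $\Gamma$ commutes with $G$, the orbit $\Gamma x$ is stable under the stabilizer-compatible translations, and one identifies $\Gamma x$ with the $G_{\ant}H/H$-orbit of $x$, forcing $\Gamma = G_{\ant}H/H$. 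Concretely: an automorphism centralizing the transitive $G$-action and sending $x$ to $gx$ must satisfy $g H g^{-1} \supseteq$ (and hence $=$) $H$ compatibly, pinning $g$ down to $N_G(H)$ acting through a central anti-affine quotient, which is exactly $G_{\ant}H/H$ by the Rosenlicht decomposition $G = G_{\aff}G_{\ant}$ together with Lemma \ref{lem:bor}(i). The only genuinely delicate point here is making the ``determined by the image of $x$'' argument precise as a statement about group schemes rather than points, but transitivity of $G$ reduces it to a computation inside $G$ which the earlier structure results (Rosenlicht, Lemma \ref{lem:bor}) settle.
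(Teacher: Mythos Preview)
Your outline for the existence of $\Aut_{\ant}(Z)$ (commuting anti-affine actions via rigidity, then a directed union stabilizing by a dimension bound) matches the paper's, but two steps need real work. First, the centralizing step is not just heuristic: the paper invokes the rigidity lemma for anti-affine varieties \cite[Thm.~1.7]{SS08}, which plays exactly the role that the classical rigidity lemma plays in \cite[Lem.~2.1]{BP08}. Second, your dimension bound is a genuine gap. The claim that faithfulness plus commutativity bounds $\dim\Gamma$ by orbit dimension is false in general: for instance $\bG_a^n$ acts faithfully on $\bA^2$ via $(t_1,\dots,t_n)\cdot(x,y)=(x,\,y+\sum_i t_i x^i)$ with $n$ arbitrary. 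What saves the anti-affine case is structure theory: writing $\Gamma_{\aff}=T\times U$ and $A=\Alb(\Gamma)$, the paper gets $\dim T\le\dim Z$ (faithful torus action), $\dim U\le\dim A$ (a constraint specific to anti-affine groups, \cite[Thm.~2.7]{Br09a}), and $\dim A\le\dim Z$ (Nishi--Matsumura applied to the Albanese of the smooth locus of $Z$), whence $\dim\Gamma\le 3\dim Z$.

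For the homogeneous case your route and the paper's genuinely diverge. You embed $\Gamma:=\Aut_{\ant}(X)$ into $\Aut_G(X)\cong N_G(H)/H$ and then try to locate its anti-affine part. This can be made to work, but not via Lemma~\ref{lem:bor}(i), which concerns Borel subgroups and is irrelevant here; the correct reason is that $G_{\ant}H$ is normal in $N_G(H)$ (since $G_{\ant}$ is central and $H\trianglelefteq N_G(H)$), and $N_G(H)/G_{\ant}H$ is a quotient of the closed subgroup $N_G(H)/G_{\ant}$ of $G/G_{\ant}=\Aff(G)$, hence affine, so every anti-affine subgroup of $N_G(H)/H$ lies in $G_{\ant}H/H$. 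The paper argues without $N_G(H)$ at all: assuming $G$ faithful so that $G_{\ant}\subset\Gamma$, it lets $\Gamma$ act on $X/G_{\ant}$, which is $G_{\aff}$-homogeneous with trivial Albanese; by Nishi--Matsumura every connected group acting faithfully there is affine, so the anti-affine $\Gamma$ acts trivially, i.e.\ each $\Gamma$-orbit lies in a $G_{\ant}$-orbit. A short freeness argument (forming $\Gamma G$ and noting $(\Gamma G)_{\ant}=\Gamma$) then forces $\Gamma=G_{\ant}$. This sidesteps the scheme-theoretic identification of $\Aut_G(X)$ with $N_G(H)/H$ that you yourself flag as delicate.
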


\begin{proof}
Consider an anti-affine algebraic group $\Gamma$ and a 
connected group scheme $G$, both acting faithfully on $Z$. 
Arguing as in the proof of \cite[Lem.~2.1]{BP08} and replacing the
classical rigidity lemma for complete varieties with its 
generalization to anti-affine varieties (see 
\cite[Thm.~1.7]{SS08}), we obtain that $\Gamma$ centralizes $G$.

Next, we claim that $\dim(\Gamma) \leq 3 \dim(Z)$. To see this,
let $\Gamma_{\aff} = U \times T$ where $U$ is a connected
commutative unipotent group and $T$ is a torus, and put 
$A := \Gamma/\Gamma_{\aff}$
so that $\dim(\Gamma) = \dim(T) + \dim(U) + \dim(A)$. Then
$\dim(T) \leq \dim (Z)$ since the torus $T$ acts faithfully on 
$Z$. Moreover, $\dim(U) \leq \dim(A)$ by \cite[Thm.~2.7]{Br09a}.
Finally, $\dim(A) \leq \dim(Z)$ since the action of $\Gamma$ 
on $Z$ induces an action of $A = \Alb(\Gamma)$ on the Albanese 
variety of the smooth locus of $Z$, and that action has a finite 
kernel by a theorem of Nishi and Matsumura (see \cite{Ma63}, or
\cite{Br09b} for a modern proof). Putting these facts together 
yields the claim. 

In turn, the claim implies the first assertion, in view of the 
connectedness of anti-affine groups.

For the second assertion, we may assume that $G$ acts faithfully
on $X$. Then $G_{\ant} \subset \Aut_{\ant}(X) =:\Gamma$, and
$\Gamma$ centralizes $G$. Thus, 
$\Gamma$ acts on $X/G_{\ant}$ so that $\varphi_X$ is equivariant.
But $X/G_{\ant}$ is homogeneous under $G_{\aff}$, and hence
has a trivial Albanese variety. By the Nishi-Matsumura theorem
again, it follows that every connected algebraic group acting 
faithfully on $X/G_{\ant}$ is affine. Since $\Gamma$ is 
anti-affine, it must act trivially on $X/G_{\ant}$. 
In particular, each orbit of $\Gamma$ in $X$ is an orbit of 
$G_{\ant}$. But $\Gamma$ acts freely on $X$ (since the product 
$\Gamma G \cong (\Gamma \times G)/G_{\ant}$ is a connected
algebraic group acting faithfully on $X$, and 
$(\Gamma G)_{\ant} = \Gamma$.) It follows that $\Gamma = G_{\ant}$.
\end{proof}

\begin{proposition}\label{prop:fib}
Assume that $G$ acts faithfully on $X$. Then the product map
\begin{equation}\label{eqn:fib}
\pi_X := (\alpha_X,\varphi_X) : X \longrightarrow 
X/G_{\aff} \times X/G_{\ant}
\end{equation}
is a torsor under $G_{\aff}H \cap G_{\ant}$, an affine commutative 
group scheme which contains $(G_{\ant})_{\aff}$ as an algebraic 
subgroup of finite index.
\end{proposition}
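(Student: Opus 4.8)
The plan is to identify $\pi_X$ with the canonical quotient morphism $G/H \to G/K$, where $K := G_{\aff} H \cap G_{\ant} H$, and then to show that $K = N H$ with $N := G_{\aff} H \cap G_{\ant}$; since $N$ is central in $G$ and $N \cap H$ is trivial, this will give $K/H \cong N$ and exhibit $\pi_X$ as an $N$-torsor.

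First I would record what faithfulness provides: $H$ is affine, $G_{\ant} \cap H$ is trivial, and $G_{\aff}$ has finite index in $G_{\aff} H$ (all already observed above), and moreover $N \subseteq G_{\ant}$ is central in $G$. A direct computation of fibres then shows that $\pi_X(gH) = \pi_X(g'H)$ if and only if $g^{-1} g' \in G_{\aff} H \cap G_{\ant} H = K$, so $\pi_X$ has the same fibres as the canonical quotient $q : G/H \to G/K$; it therefore suffices to prove that $q$ is a torsor under $N$. For the key identity $K = NH$: the inclusion $\supseteq$ is immediate from $N \subseteq G_{\aff} H$ and $N \subseteq G_{\ant} \subseteq G_{\ant} H$, and conversely, writing $k \in K$ as $k = z h$ with $z \in G_{\ant}$, $h \in H$, one has $z = k h^{-1} \in G_{\aff} H$, hence $z \in N$ and $k \in NH$. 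Since $N$ is central, $H$ is normal in $K = NH$, so $K/H$ is a group scheme; and since $N \cap H \subseteq G_{\ant} \cap H$ is trivial, $K/H = NH/H \cong N$ (indeed $K \cong N \times H$). As $G \to G/K$ is a $K$-torsor, $G/H = G \times^K (K/H) \to G/K$ is a torsor under $K/H \cong N$, as desired.

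It then remains to describe $N$. It is commutative, being contained in the commutative group $G_{\ant}$. It is affine: using $G_{\aff} \subseteq G_{\aff} H$ one gets $N \cap G_{\aff} = G_{\ant} \cap G_{\aff}$, which is affine, while $N/(N \cap G_{\aff})$ embeds into the finite group scheme $G_{\aff} H/G_{\aff}$, so $N$ is an extension of a finite group scheme by an affine one. Finally, $(G_{\ant})_{\aff}$ is a connected affine subgroup of $G$, hence lies in $G_{\aff}$ and thus in $G_{\ant} \cap G_{\aff} \subseteq N$; it has finite index in $N$ because $(G_{\ant} \cap G_{\aff})/(G_{\ant})_{\aff}$ is finite by the Rosenlicht decomposition and $N/(G_{\ant} \cap G_{\aff})$ is finite by the above. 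The only slightly delicate points are the identification of the fibres of $\pi_X$ with those of $q$, together with the (standard, and valid for the possibly non-reduced group schemes at hand) fact that $q$ is a torsor under $K/H$ once $H$ is normal in $K$; everything else is formal manipulation with the Rosenlicht decomposition.
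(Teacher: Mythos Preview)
Your proof is correct and follows essentially the same route as the paper's: identify $\pi_X$ with $G/H \to G/K$ for $K = G_{\aff}H \cap G_{\ant}H$, show $K = NH \cong N \times H$ with $N = G_{\aff}H \cap G_{\ant}$ central and $N \cap H$ trivial, and analyse $N$ via the Rosenlicht decomposition. The one step you leave implicit is the surjectivity of $\pi_X$ onto the full product (equivalently, that $G$ acts transitively on $X/G_{\aff} \times X/G_{\ant}$, so that the induced map $G/K \to X/G_{\aff} \times X/G_{\ant}$ is an isomorphism); the paper handles this in one line by invoking $G = G_{\aff}\,G_{\ant}$.
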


\begin{proof}
The map $\pi_X$ is identified with the natural map
$$
G/H \longrightarrow G/G_{\aff} H \times G/G_{\ant}H.
$$
By the Rosenlicht decomposition (\ref{eqn:ros}), the right-hand 
side is homogeneous under $G$; it follows that we may view $\pi_X$ 
as the natural map
$$
G/H \longrightarrow G/(G_{\aff} H \cap G_{\ant} H).
$$
But $H$ is a normal subgroup scheme of $G_{\ant} H$, and hence
of $G_{\aff} H \cap G_{\ant} H$. Moreover,
$$
G_{\aff} H \cap G_{\ant} H = (G_{\aff} H \cap G_{\ant}) H
\cong (G_{\aff} H \cap G_{\ant}) \times H,
$$
since $G_{\aff} H \cap G_{\ant}$ centralizes $H$, and
$(G_{\aff} H \cap G_{\ant}) \cap H = G_{\ant} \cap H$ is trivial
by the faithfulness assumption. It follows that $\pi_X$
is a torsor under $G_{\aff} H \cap G_{\ant}$. The latter group 
scheme contains $G_{\aff} \cap G_{\ant}$ as a normal subgroup 
scheme, and 
$$ 
(G_{\aff} H \cap G_{\ant})/(G_{\aff} \cap G_{\ant})
\cong (G_{\aff} H \cap G_{\ant})G_{\aff}/ G_{\aff}
= G_{\aff} H/G_{\aff}
$$
where the latter equality follows again from (\ref{eqn:ros}).
As a consequence, the quotient  
$(G_{\aff} H \cap G_{\ant})/(G_{\aff})_{\ant}$
is finite; this completes the proof.
\end{proof}

We now obtain a criterion for the local triviality of $\varphi_X$, 
which generalizes Proposition \ref{prop:aff} (i) with a different 
argument. The map $\alpha_X$ is not necessarily locally trivial,
as shown by Example \ref{ex:nlt}.

\begin{proposition}\label{prop:loc}
Assume that $G$ acts faithfully on $X$. Then the $G_ {\ant}$-torsor
$\varphi_X$ is locally trivial if and only if 
$G_{\aff} \cap G_{\ant} = (G_{\ant})_{\aff}$ and $H \subset G_{\aff}$. 
Under these assumptions, $\pi_X$ is locally trivial as well.
\end{proposition}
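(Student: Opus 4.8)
The plan is to prove the two implications separately, reducing the decisive step to the already established Proposition~\ref{prop:aff} (the case $H=\{e_G\}$).

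\emph{The implication $(\Leftarrow)$.} Assume $H\subset G_{\aff}$ and $G_{\aff}\cap G_{\ant}=(G_{\ant})_{\aff}$. Using the Rosenlicht decomposition $G=G_{\aff}G_{\ant}$ together with these two facts, I would first identify $X=G/H$ with $\big((G_{\aff}/H)\times G_{\ant}\big)/(G_{\ant})_{\aff}$, where $(G_{\ant})_{\aff}$ acts on $G_{\aff}/H$ by right translation through its (central) inclusion $(G_{\ant})_{\aff}\subset G_{\aff}$ — a free action, since $(G_{\ant})_{\aff}\cap H\subset G_{\ant}\cap H=\{e_G\}$ by faithfulness — and on $G_{\ant}$ by the inverse of right translation. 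Under this identification $\varphi_X$ becomes the bundle over $Y=(G_{\aff}/H)/(G_{\ant})_{\aff}$ associated with the $(G_{\ant})_{\aff}$-torsor $\theta:G_{\aff}/H\to Y$ and the $(G_{\ant})_{\aff}$-variety $G_{\ant}$. Since $(G_{\ant})_{\aff}$ is a connected solvable affine algebraic group, $\theta$ is Zariski locally trivial by \cite[Prop.~14]{Se58}, and hence so is $\varphi_X$. The last assertion follows in the same spirit: under the hypotheses $G_{\aff}H\cap G_{\ant}=G_{\aff}\cap G_{\ant}=(G_{\ant})_{\aff}$, so the torsor $\pi_X$ of Proposition~\ref{prop:fib} is a torsor under a connected solvable group, hence locally trivial by the same reference.

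\emph{The implication $(\Rightarrow)$.} Assume $\varphi_X$ is locally trivial, so that it has a rational section $\sigma:Y\dashrightarrow X$. Put $K:=G_{\ant}H$; as $G_{\ant}$ is central, $K$ is a subgroup scheme with $H$ normal in $K$ and $K/H\cong G_{\ant}/(G_{\ant}\cap H)=G_{\ant}$ by faithfulness. Since $\varphi_X$ is the bundle $G\times^K(K/H)\to G/K=Y$ associated with the $K$-torsor $q:G\to G/K$, and $K/H\cong G_{\ant}$ acts simply transitively on the fibres of $\varphi_X$, the section $\sigma$ determines a genuine rational map of varieties $s:G\dashrightarrow G_{\ant}$ — defined wherever $\sigma$ is — by the rule $\sigma(q(g))=(gH)\cdot s(g)$. (Here the simple transitivity of the $G_{\ant}$-action on fibres is what makes $s$ an honest rational map rather than merely a section in a Grothendieck topology.) Unwinding this identity yields $s(g\,t\,h)=t^{-1}s(g)$ for $t\in G_{\ant}$ and $h\in H$; in particular $s(gt)=t^{-1}s(g)$ and $s(gh)=s(g)$.

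Next I would set $\psi:G\dashrightarrow G$, $\psi(g):=g\,s(g)$. From $s(gt)=t^{-1}s(g)$ one gets $\psi(gt)=\psi(g)$ for $t\in G_{\ant}$, so $\psi$ descends to a rational map $\bar\psi:\Aff(G)=G/G_{\ant}\dashrightarrow G$; and since $s(g)\in G_{\ant}$ we have $\varphi_G\circ\bar\psi=\id$, so $\bar\psi$ is a rational section of $\varphi_G$. Consequently $\varphi_G$ is locally trivial (cf.\ the proof of Proposition~\ref{prop:aff}(i)), whence $G_{\aff}\cap G_{\ant}=(G_{\ant})_{\aff}$ by Proposition~\ref{prop:aff}(i). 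On the other hand $s(gh)=s(g)$, together with the centrality of $s(g)\in G_{\ant}$, gives $\psi(gh)=\psi(g)\,h$ for $h\in H$, so $\bar\psi$ is equivariant for the right translation actions of $H$ on $\Aff(G)$ (these factor through $\bar H:=G_{\ant}H/G_{\ant}\cong H$) and on $G$. The composition $\alpha_G\circ\bar\psi:\Aff(G)\dashrightarrow\Alb(G)$ is a rational map from a (smooth) rational variety, namely a connected linear algebraic group, to an abelian variety, hence is constant, say with value $c$; the equivariance of $\bar\psi$ then forces, over a dense open of $\Aff(G)\times H$, the identity $c=\alpha_G(\bar\psi(\bar g\,h))=\alpha_G(\bar\psi(\bar g))+\alpha_G|_H(h)=c+\alpha_G|_H(h)$. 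Therefore the homomorphism $\alpha_G|_H:H\to\Alb(G)$ vanishes, i.e.\ $H\subset\ker(\alpha_G)=G_{\aff}$.

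The main obstacle, as I see it, is getting the first step of $(\Rightarrow)$ exactly right: translating the section $\sigma$ of $\varphi_X$ into the equivariant rational map $s:G\dashrightarrow G_{\ant}$, and from it the rational section $\bar\psi$ of $\varphi_G$, so that Proposition~\ref{prop:aff} delivers the first equality while the residual $\bar H$-equivariance of $\bar\psi$ detects the vanishing of $\alpha_G$ on $H$. A secondary point requiring attention is that the last conclusion $H\subset G_{\aff}$ must be read scheme-theoretically: the identity $\alpha_G|_H=0$ is obtained over a dense open subscheme of $H$, which is enough since $H$, being a group scheme of finite type over $k$, has no embedded components.
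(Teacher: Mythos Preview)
Your proof is correct. The $(\Leftarrow)$ direction and the local triviality of $\pi_X$ are argued essentially as in the paper: both of you use that $(G_{\ant})_{\aff}$ is connected solvable so that the torsor $G_{\aff}/H \to (G_{\aff}/H)/(G_{\ant})_{\aff}$ has local sections, and you simply package this via the associated-bundle description of $\varphi_X$, while the paper phrases it as ``$G_{\aff}/H$ is a closed subvariety of $X$ providing local sections of $\varphi_X$''.

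The $(\Rightarrow)$ direction, however, is genuinely different. The paper argues purely through the Albanese functor: local triviality gives an open subset of $X$ isomorphic to $G_{\ant}\times Y$, whence $\Alb(X)\cong\Alb(G_{\ant})$; comparing with the explicit descriptions $\Alb(G_{\ant})=G_{\ant}/(G_{\ant})_{\aff}$ and $\Alb(X)=G_{\ant}/(G_{\ant}\cap G_{\aff}H)$ yields the single identity $(G_{\ant})_{\aff}=G_{\ant}\cap G_{\aff}H$, from which both conclusions drop out at once. Your route is more constructive: you manufacture from the section of $\varphi_X$ an equivariant rational map $s:G\dashrightarrow G_{\ant}$, use it to build a rational section of $\varphi_G$ (so that Proposition~\ref{prop:aff} gives $G_{\aff}\cap G_{\ant}=(G_{\ant})_{\aff}$), and then exploit the residual $H$-equivariance together with the constancy of rational maps from a linear group to an abelian variety to force $\alpha_G|_H=0$. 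This is longer and requires the care you note about schematic density (your appeal to the Cohen--Macaulay property of group schemes over a field is the right fix), but it has the merit of making explicit the mechanism---the torsor $\varphi_X$ literally pushes forward to $\varphi_G$---and it reuses Proposition~\ref{prop:aff} rather than reproving its content. The paper's argument is shorter and more conceptual; yours is more hands-on and perhaps more transparent about why $H\subset G_{\aff}$ is forced.
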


\begin{proof}
If $\varphi_X$ is locally trivial, then $X$ contains an open 
$G_{\ant}$-stable subset, equivariantly isomorphic to 
$G_{\ant} \times Y$, where $Y$ is an open subset of $X/G_{\ant}$.
It follows that 
$$
\Alb(X) \cong \Alb(G_{\ant} \times Y) 
\cong \Alb(G_{\ant}) \times \Alb(Y) 
\cong \Alb(G_{\ant}) \times \Alb(X/G_{\ant}).
$$
But $\Alb(X/G_{\ant})$ is trivial, since $X/G_{\ant}$ is 
homogeneous under $G_{\aff}$. As a consequence, the natural map
$$
\Alb(G_{\ant})= \Alb(G_{\ant}H/H) \longrightarrow \Alb(G/H) = \Alb(X)
$$ 
is an isomorphism. Now recall that
$$
\Alb(G_{\ant}) = G_{\ant}/(G_{\ant})_{\aff} 
\quad \text{and} \quad 
\Alb(X) = G_{\ant}/(G_{\ant} \cap G_{\aff}H).
$$
It follows that
\begin{equation}\label{eqn:groups}
(G_{\ant})_{\aff} = G_{\ant} \cap G_{\aff} H. 
\end{equation}
In particular, $(G_{\ant})_{\aff} = G_{\ant} \cap G_{\aff}$,
and 
$$
G_{\aff} H = G_{\aff} (G_{\ant} \cap G_{\aff} H) 
= G_{\aff} (G_{\ant})_{\aff} = G_{\aff},
$$
i.e., $H \subset G_{\aff}$. 

Conversely, if $(G_{\ant})_{\aff} = G_{\ant} \cap G_{\aff}$ 
and $H \subset G_{\aff}$, then (\ref{eqn:groups}) clearly holds.
As a consequence, $\pi_X$ is locally trivial. Moreover,
$X/G_{\ant} \cong G_{\aff}/(G_{\ant})_{\aff}H$
and $(G_{\ant})_{\aff}\cap H$ is trivial. Thus, the natural map
$G_{\aff}/H \longrightarrow G_{\aff}/(G_{\ant})_{\aff}H$
is a $(G_{\ant})_{\aff}$-torsor, and hence has local sections.
Since $G_{\aff}/H$ is a closed subvariety of $X$, this yields
local sections of $\varphi_X$.
\end{proof}

\begin{example}\label{ex:nlt}
Let $G := A \times \SL(2)$, where $A$ is an abelian variety.
Denote by $T$ the diagonal torus of $\SL(2)$, and let $H$ be
the subgroup of $G$ generated by $T$ and $(a,n)$, where
$a \in A$ is a point of order $2$, and $n$ is any point of 
$N_{\SL(2)}(T) \setminus T$. Then the Albanese morphism of 
$G/H$ is not locally trivial.

Otherwise, the pull-back 
$$
i^* : \Pic(G/H) \longrightarrow \Pic(G_{\aff}H/H)
$$
(under the inclusion $i : G_{\aff}H/H \to G/H$)
is surjective, since $G_{\aff}H/H$ is a fibre of $\alpha_{G/H}$.
We now show that $\Pic(G_{\aff}H/H) \cong \bZ$, and
$$
\alpha^*_{G/H} : \Pic\big( \Alb(G/H) \big) \longrightarrow \Pic(G/H)
$$
is an isomorphism over the rationals. Since the composite map
$i^* \alpha^*_{G/H}$ is zero, this yields a contradiction.

Clearly, $G_{\aff} = \SL(2)$ and $G_{\aff} \cap H = H^0 = T$, 
and hence
$$
G_{\aff}H/H \cong G_{\aff}/(G_{\aff} \cap H) \cong \SL(2)/T.
$$
As a consequence, 
$\Pic(G_{\aff}H/H) \cong \X(T) \cong \bZ$.
Moreover, $G/H^0 \cong A \times \SL(2)/T$ equivariantly
for the action of $H/H^0$. The latter group has order $2$, and
its non-trivial element acts  via translation by $a$ on $A$, and
via right multiplication by $n$ on $\SL(2)/T$.
Since the natural map $G/H^0 \to G/H$ is the quotient by $H/H^0$
acting via right multiplication, we obtain by 
\cite[Ex.~1.7.6]{Fu98}:
$$
\Pic(G/H)_{\bQ} \cong \Pic(G/H^0)_{\bQ}^{H/H^0}
\cong \Pic\big( A \times \SL(2)/T\big)^{(a,n)}
$$
Moreover, the natural map 
$\Pic(A) \times \Pic\big( \SL(2)/T \big) \to
\Pic\big( A \times \SL(2)/T \big)$
is an isomorphism, since the variety $\SL(2)/T$ is rational. 
Also, $n$ acts via multiplication by $-1$ on 
$\Pic\big(\SL(2)/T\big) \cong \bZ$. This yields an isomorphism
\begin{equation}\label{eqn:isom}
\Pic(G/H)_{\bQ} \cong \Pic(A)^a_{\bQ} \cong \Pic(A/a)_{\bQ}, 
\end{equation}
where $A/a = G/G_{\aff} H $ is the Albanese variety of $G/H$
(this description of the rational Picard group will be generalized 
to all homogeneous spaces in the final subsection). 
The isomorphism (\ref{eqn:isom}) is obtained from the pull-back 
$\Pic(A) \to \Pic\big( A \times \SL(2)/T \big)$ 
under $\alpha_{G/H^0}$, and hence is the pull-back under $\alpha_{G/H}$.
\end{example}

\subsection{Complete homogeneous spaces}
\label{subsec:chs}

In this subsection, we describe the subgroup schemes $H \subset G$ 
such that $G/H$ is complete, in terms of the Rosenlicht decomposition.

\begin{theorem}\label{thm:complete}
Let $G$ be a connected algebraic group, and $H$ a closed subgroup
scheme. Then the homogeneous space $G/H$ is complete if and only if
\begin{equation}\label{eqn:dec}
H = (H \cap G_{\aff})(H \cap G_{\ant})
\end{equation}
where both $G_{\aff}/(H \cap G_{\aff})$ and $G_{\ant}/(H\cap G_{ant})$ 
are complete; equivalently, $H \cap G_{\aff}$ contains
a Borel subgroup of $G_{\aff}$, and $H \cap G_{\ant}$ contains
$(G_{\ant})_{\aff}$.

Under these assumptions, the map (\ref{eqn:fib}),
$$
\pi_{G/H}: G/H \longrightarrow G/G_{\aff}H \times G/G_{\ant}H
\cong G_{\ant}/(H\cap G_{\ant}) \times G_{\aff}/(H \cap G_{\aff})
$$
is an isomorphism, $G_{\ant}/(H\cap G_{\ant})$ is an 
abelian variety, and $G_{\aff}/(H \cap G_{\aff})$
is a (complete, homogeneous) rational variety.
\end{theorem}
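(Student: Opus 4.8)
The plan is to prove the equivalence of ``$G/H$ complete'' with condition (\ref{eqn:dec}) (together with the stated completeness of the two factor spaces), and then read off the structural claims. A convenient intermediate reformulation is the single condition
$$(\ast)\qquad H\cap G_{\aff}\ \text{contains a Borel subgroup}\ B\ \text{of}\ G_{\aff}.$$
First I would observe that $(\ast)$ already subsumes the ``equivalently'' clause: by Lemma \ref{lem:bor}(i) one has $(G_{\ant})_{\aff}\subset G_{\aff}\cap G_{\ant}\subset B\subset H$, so $H\cap G_{\ant}\supset(G_{\ant})_{\aff}$, whence $G_{\ant}/(H\cap G_{\ant})$ is a quotient of $\Alb(G_{\ant})$ and is complete, while $G_{\aff}/(H\cap G_{\aff})$ is complete because $H\cap G_{\aff}$ is then parabolic; conversely, completeness of $G_{\aff}/(H\cap G_{\aff})$ forces $H\cap G_{\aff}$ to be parabolic, hence to contain a Borel. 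So, granting (\ref{eqn:dec}), condition $(\ast)$ and the ``equivalently'' conditions coincide.

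For ``$G/H$ complete $\Rightarrow(\ast)$'': the fibre of the Albanese morphism $\alpha_{G/H}\colon G/H\to G/G_{\aff}H$ is $G_{\aff}H/H\cong G_{\aff}/(G_{\aff}\cap H)$, a closed subscheme of the complete variety $G/H$, hence complete; a complete homogeneous space under the connected affine group $G_{\aff}$ has parabolic isotropy group, so $G_{\aff}\cap H$ contains a Borel. The heart of the argument — and, I expect, the main obstacle — is the reverse implication $(\ast)\Rightarrow(\ref{eqn:dec})$. Set $P':=H\cap G_{\aff}$ (parabolic, since it contains $B$) and $P_2:=HG_{\ant}\cap G_{\aff}$ (parabolic, since $P_2\supset P'$). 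As $B\subset H$, the image $H/B$ of $H$ in $G/B$ is closed, and under the isomorphism $G/B\cong\Alb(G)\times G_{\aff}/B$ of Lemma \ref{lem:bor}(iii) the left $H$-action is diagonal, through $\alpha_G$ on the abelian variety and through $\varphi_G$ on $G_{\aff}/B$. Projecting $H/B$ to $G_{\aff}/B$ yields an $H$-equivariant surjection onto the orbit $P_2/B$ of the base point, whose fibre over that point is $(H\cap BG_{\ant})/B=B(H\cap G_{\ant})/B\cong(H\cap G_{\ant})/(G_{\aff}\cap G_{\ant})$; hence $\dim H=\dim P_2+\dim\big((H\cap G_{\ant})/(G_{\aff}\cap G_{\ant})\big)$. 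On the other hand $\alpha_G|_H\colon H\to\Alb(G)$ has kernel $P'$ and image $D_1:=\alpha_G(H)$, so $\dim H=\dim P'+\dim D_1$. Subtracting, $\dim P_2-\dim P'=\dim D_1-\dim D_0$ where $D_0:=\alpha_G(H\cap G_{\ant})\subset D_1$. Now $D_1$ is a closed subgroup scheme of the abelian variety $\Alb(G)$, hence proper, so $D_1/D_0$ is proper; but $\alpha_G$ also induces a surjection onto $D_1/D_0$ from $\varphi_G(H)\cong H/(H\cap G_{\ant})$, an affine group scheme, so $D_1/D_0$ is affine; being proper and affine it is finite, whence $\dim P_2=\dim P'$ and, since $P'\subset P_2$ are connected of equal dimension, $P'=P_2$, i.e. $HG_{\ant}\cap G_{\aff}\subset H$. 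Finally, for $h\in H$ write $h=g'z$ with $g'\in G_{\aff}$, $z\in G_{\ant}$ using (\ref{eqn:ros}); then $g'=hz^{-1}\in HG_{\ant}\cap G_{\aff}\subset H$ and $z=g'^{-1}h\in H$, giving (\ref{eqn:dec}). (The identity component is harmless: Rosenlicht applied to $H^0$ already gives $H^0=(H^0\cap G_{\aff})(H^0\cap G_{\ant})$, so the real content of (\ref{eqn:dec}) is the matching of component groups, which $P'=P_2$ provides; alternatively one could deduce (\ref{eqn:dec}) from the Sancho de Salas splitting $G/H\cong\Alb(G/H)\times Y$ recalled in the introduction, but the argument above is self-contained.)

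It remains to deduce completeness and the structural statements from (\ref{eqn:dec}) plus $(\ast)$. Here $G_{\aff}\cap G_{\ant}\subset B\subset H$, so the multiplication map $\mu\colon G_{\aff}\times G_{\ant}\to G$ (faithfully flat, with kernel the antidiagonal copy of $G_{\aff}\cap G_{\ant}$) satisfies $\mu^{-1}(H)=(H\cap G_{\aff})\times(H\cap G_{\ant})$ — a short computation from (\ref{eqn:dec}) and $G_{\aff}\cap G_{\ant}\subset H$ — and hence induces an isomorphism
$$G/H\ \cong\ G_{\aff}/(H\cap G_{\aff})\ \times\ G_{\ant}/(H\cap G_{\ant}).$$
The right-hand side is a product of complete varieties, so $G/H$ is complete; moreover, using once more that $G_{\aff}\cap G_{\ant}\subset H$ one identifies $G/G_{\aff}H\cong G_{\ant}/(H\cap G_{\ant})$ and $G/G_{\ant}H\cong G_{\aff}/(H\cap G_{\aff})$, under which the displayed isomorphism is precisely $\pi_{G/H}=(\alpha_{G/H},\varphi_{G/H})$ of (\ref{eqn:fib}); thus $\pi_{G/H}$ is an isomorphism. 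Finally $G_{\ant}/(H\cap G_{\ant})$, a quotient of $\Alb(G_{\ant})=G_{\ant}/(G_{\ant})_{\aff}$ by a closed subgroup scheme, is an abelian variety, and $G_{\aff}/(H\cap G_{\aff})$ is the complete homogeneous variety $G_{\aff}/P'$ attached to the parabolic $P'$, which is rational by the big-cell (Bruhat) decomposition. This closes the circle of implications and gives the theorem.
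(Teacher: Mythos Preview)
Your overall strategy differs from the paper's: rather than reducing to the product case $G\cong A\times G_{\aff}$ and then analysing the infinitesimal part of $H$ via the opposite parabolic and a torus--GIT argument, you work directly with the isomorphism $G/B\cong\Alb(G)\times G_{\aff}/B$ and a dimension comparison between $P'=H\cap G_{\aff}$ and $P_2=HG_{\ant}\cap G_{\aff}$. This is attractive and, in characteristic~$0$, complete.

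There is however a genuine gap in positive characteristic. Your dimension count correctly gives $\dim P'=\dim P_2$ (equivalently, the commutative group scheme $D_1/D_0\cong P_2/P'\cong H/\big((H\cap G_{\aff})(H\cap G_{\ant})\big)$ is finite), but the conclusion ``$P'\subset P_2$ connected of equal dimension, hence $P'=P_2$'' presupposes that a closed subgroup \emph{scheme} of $G_{\aff}$ containing a Borel is automatically reduced (a parabolic in the classical sense). In characteristic~$p$ this fails: for $G_{\aff}=\SL(2)$ with upper Borel $B$ and first Frobenius kernel $G_1$, the subgroup scheme $BG_1$ strictly contains $B$, has the same dimension as $B$, and is not smooth. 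So equal dimension plus inclusion does not force equality among such subgroup schemes, and your argument only shows $P_2/P'$ is finite, not trivial. In characteristic~$0$ Cartier's theorem makes $P'$ and $P_2$ smooth, whence genuinely parabolic, and then your inference is valid; but the paper is stated over an arbitrary algebraically closed field.

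This is precisely the obstruction the paper's proof is built to overcome. After the reductions ($G_{\aff}\cap G_{\ant}$, $R(G_{\aff})$, and $H\cap A$ all made trivial), one has $H_{\red}=P$ parabolic, and the remaining possibly non-reduced piece $\Gamma:=H\cap\big(A\times R_u(P^-)\big)$ is finite and normalized by a maximal torus $T\subset P$; the torus action on $A\times R_u(P^-)$ contracts the $R_u(P^-)$-direction, so the $T$-fixed subscheme $\Gamma^T$ lies in $A\cap H=\{e\}$, forcing $\Gamma$ trivial and $H=P$. Your proof would need an analogous device (for instance, observing that $P_2$ normalises $P'$ since $G_{\ant}$ is central and $H\cap G_{\aff}\trianglelefteq H$, and then invoking the scheme-theoretic self-normalising property of subgroup schemes containing a Borel) to close the gap in characteristic~$p$.
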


\begin{proof}
If (\ref{eqn:dec}) holds, then the Rosenlicht decomposition yields 
a surjective morphism
$$
G_{\aff}/(H \cap G_{\aff}) \times G_{\ant}/(H \cap G_{\ant})
\longrightarrow G/H,
$$
and hence $G/H$ is complete.

To show the converse, we first argue along the lines of the proof of
\cite[Thm.~4]{Br09b}. If $G/H$ is complete, then $H$ contains a 
Borel subgroup $B \subset G_{\aff}$ by Borel's fixed point theorem. 
Hence $G_{\aff} \cap G_{\ant}$ fixes the base point of $G/H$, 
by Lemma \ref{lem:bor}.
But $G_{\aff} \cap G_{\ant}$ is contained in the centre of $G$, 
and hence acts trivially on $G/H$. Thus, we may replace $G$ 
with $G/(G_{\aff} \cap G_{\ant})$, and hence assume that
$$
G \cong A \times G_{\aff},
$$
where $A$ is an abelian variety. In particular, $G_{\ant} = A$.

Also, note that the radical $R(G_{\aff})$ fixes a point of $G/H$, 
and hence is contained in $H$. Thus, we may assume that $G_{\aff}$
is semi-simple. 

We may further assume that $A$ acts faithfully on $G/H$; then 
the (scheme theoretic) intersection $A \cap H$ is just the neutral
element $e_A$. Thus, the second projection 
$$
p_2 : G \longrightarrow G_{\aff}
$$ 
restricts to a closed immersion 
$H \hookrightarrow G_{\aff}$. In particular, $H$ is 
affine; therefore, the reduced neutral component $H^0_{\red}$ is 
contained in $G_{\aff}$. Clearly, $H^0_{\red}$ contains $B$, and 
hence is a parabolic subgroup of $G$ that we denote by $P$. 
Moreover, $p_2(H)$ contains $P$ as a subgroup of finite index;
thus, $p_2(H)_{\red} = P$. It follows that $H_{\red} = P$.  

We now diverge from the proof in \cite[Thm.~4]{Br09b}, which relies 
on the Bialynicki-Birula decomposition. Choose a parabolic 
subgroup $P^- \subset G_{\aff}$ opposite to $P$. Then the product 
$$
P \, R_u(P^-) \cong P \times R_u(P^-)
$$ 
is an open neighborhood of $P$ in $G_{\aff}$, and hence of 
$p_2(H)$ as well. Thus, 
$p_2(H) = P \, \big(R_u(P^-) \cap H) \big)$
where $R_u(P^-)\cap H$ is a finite (local) group scheme, normalized
by the Levi subgroup $L = P \cap P^-$ of $P$. It follows that 
$$
H = P \, \Gamma,
$$
where $\Gamma := \big( A \times R_u(P^-) \big) \cap H$ is 
isomorphic to $R_u(P^-) \cap H$ via $p_2$; note that $L$ 
normalizes $\Gamma$. Next, choose a maximal torus $T \subset L$.
Then $T$ acts on $A \times R_u(P^-)$ by conjugation, and the quotient 
(in the sense of geometric invariant theory) is the first projection 
$$
p_1 : A \times R_u(P^-) \longrightarrow A
$$
with image the $T$-fixed point subscheme. Thus, for the closed 
$T$-stable subscheme $\Gamma \subset A \times R_u(P^-)$, the quotient 
is the restriction of $p_1$ with image $A \cap \Gamma$. But  
$A \cap \Gamma \subset A \cap H$ is trivial. Hence so is
$p_1(\Gamma) = p_1(H)$, i.e., $H \subset G_{\aff}$. 
Thus, 
$$
G/H = A \times (G_{\aff}/H)
$$ 
with projections $\alpha_{G/H}$ and $\varphi_{G/H}$. This proves all 
our assertions.
\end{proof}

\begin{remarks}
(i) Theorem \ref{thm:complete} gives back the isomorphism 
$G/B \cong A \times G_{\aff}/B$, obtained in Lemma \ref{lem:bor}
via a more direct argument.

\smallskip

\noindent
(ii) It is easy to describe the affine or quasi-affine homogeneous
spaces in terms of the Rosenlicht decomposition. Specifically,  
$G/H$ is affine (resp.~quasi-affine) if and only if $G$ contains 
$G_{\ant}$ and $G_{\aff}/(H \cap G_{\aff})$ is affine 
(resp.~quasi-affine). Indeed, $G_{\ant}$ acts trivially on any 
quasi-affine variety, as follows e.g. from \cite[Lem.~1.1]{Br09a}.
\end{remarks}

\subsection{Rational Chow ring}
\label{subsec:rcpg}

In this subsection, we describe the rational Chow ring 
$\A^*(G/H)_{\bQ}$, where $G/H$ is as in Subsection \ref{subsec:tf}.
We may assume that $G$ acts faithfully on $G/H$, and hence
that $H$ is affine; in particular, $H^0 \subset G_{\aff}$. 
We may assume in addition that $H$ is reduced. Indeed, for
an arbitrary subgroup scheme $H$, the natural map
$\pi: G/H_{\red} \to G/H$ is a torsor under the infinitesimal
group scheme $H/H_{\red}$; thus, $\pi$ is finite and bijective, 
and 
$$
\pi^*: A^*(G/H_{\red}) \longrightarrow A^*(G/H)
$$ 
is an isomorphism over $\bQ$.

To state our result, we need some notation and preliminaries.
Let $T \subset G_{\aff}$ be a maximal torus, and $W$ its Weyl group.
Denote by $S = S_T$ the symmetric algebra of the character group
$\X(T)$; then $W$ acts on $S$, and the invariant ring $S^W$ is
independent of the choice of $T$; we denote that graded ring by
$S_{G_{\aff}}$. 

\begin{lemma}\label{lem:res}
{\rm (i)} The restriction to $T$ induces an injective
homomorphism $\X(G_{\aff}) \to \X(T)^W$ with finite cokernel.
In particular, 
\begin{equation}\label{eqn:deg1}
\X(G_{\aff})_{\bQ} \cong S_{G_{\aff},\bQ}^1
\end{equation}
(the subspace of homogeneous elements of degree $1$).

\smallskip

\noindent
{\rm (ii)} Choose a maximal torus $T_H$ of $H$, and a maximal
torus $T$ of $G$ containing $T_H$. Then the restriction to $T_H$ 
induces a homomorphism of graded rings
\begin{equation}\label{eqn:res0}
r_{H^0}: S_{G_{\aff}} \longrightarrow S_{H^0}.
\end{equation}
Moreover, the quotient $H/H^0$ acts on $S_{H^0}$, and the image
of $r_{H^0}$ is contained in the invariant subring.
\end{lemma}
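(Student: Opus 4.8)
The plan is to prove Lemma \ref{lem:res} in two parts, following the structure of the statement.

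\textbf{Part (i).} I would start from the exact sequence (\ref{eqn:picgaff}), or rather the more elementary input behind it: restriction $\X(G_{\aff}) \to \X(T)$ is injective, since $T$ is (Zariski) dense in no subgroup but its image lands in $\X(T)$ faithfully because $T$ meets every conjugacy class enough — more precisely, a character of $G_{\aff}$ trivial on $T$ is trivial on the Borel $B = TR_u(B)$ (being trivial on $T$ and automatically trivial on the unipotent radical $R_u(B)$ since $\X(R_u(B))=0$), hence trivial on $G_{\aff}$ as $\X(G_{\aff}) = \X(B)$ by (\ref{eqn:picgaff}). The image lies in the Weyl-invariants $\X(T)^W$ because characters of $G_{\aff}$ are $N_{G_{\aff}}(T)$-conjugation-invariant. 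For finiteness of the cokernel: any $W$-invariant character of $T$ extends $\bQ$-rationally to $G_{\aff}$ — equivalently $\X(T)^W_{\bQ}$ is spanned by restrictions of characters of $G_{\aff}$. One clean way is to pass to $G_{\aff}/R_u(G_{\aff})$, a reductive group with the same maximal torus image and same Weyl group, where the statement is the classical fact that $\X(T)^W_{\bQ}$ is the (rational) character group of the reductive group modulo its derived subgroup; the index is finite because the semisimple part contributes only torsion to $\X(T)^W$. The displayed isomorphism (\ref{eqn:deg1}) is then immediate, since $S^1_{G_{\aff},\bQ} = (\X(T)_{\bQ})^W = \X(T)^W_{\bQ}$.

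\textbf{Part (ii).} The restriction of characters $\X(T) \to \X(T_H)$ is $\bZ$-linear and hence induces a homomorphism of symmetric algebras $S_T \to S_{T_H}$. I need to check this descends to a map $S_{G_{\aff}} = S_T^W \to S_{H^0} = S_{T_H}^{W_{H^0}}$ that is independent of the choices. Independence of the choice of $T \supset T_H$: any two such maximal tori of $G$ containing $T_H$ are conjugate by an element of $C_G(T_H)^0$ (a connected group, by the conjugacy of maximal tori in the centralizer), and such conjugation is the identity on $\X(T_H)$-restrictions; so the composite $S_T^W \to S_{T_H}$ does not depend on $T$. Similarly for the choice of $T_H$ inside $H^0$. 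That the image lands in $S_{T_H}^{W_{H^0}}$: elements of $W_{H^0} = N_{H^0}(T_H)/C_{H^0}(T_H)$ act on $\X(T_H)$ via conjugation by elements of $H^0 \subset G_{\aff}$; lift such an element $n$ to $N_{H^0}(T_H)$, and since $n$ normalizes $T_H$ it lies in $C_G(T_H)$, which (choosing $T$ appropriately, or using the previous paragraph's independence) can be arranged to normalize $T$ up to an element of $C_G(T_H)^0$; one then sees that restricting a $W$-invariant polynomial on $\X(T)$ to $\X(T_H)$ gives something killed by the $W_{H^0}$-action, because the $W$-orbit already absorbs all $G_{\aff}$-conjugation. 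Finally, $H/H^0$ acts on $H^0$ by conjugation, hence on its maximal tori (all conjugate under $H^0$), hence on $S_{H^0}$ canonically; and since $r_{H^0}$ already factors through conjugation-invariants for the larger group $G_{\aff}$, which contains representatives for the $H/H^0$-action on $T_H$ (any element of $H$ normalizing $H^0$ and $T_H$ lies in $N_{G_{\aff}}(T_H) \subset G_{\aff}$), the image is $H/H^0$-invariant.

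\textbf{Main obstacle.} The delicate point is the bookkeeping around the maximal tori: making the map $r_{H^0}$ genuinely canonical requires verifying that all the relevant conjugations — changing $T$ over $T_H$, the $W_{H^0}$-action, and the $H/H^0$-action — are compatible and act trivially on the relevant restriction map, and that $W_{H^0}$-invariance really does follow from $W$-invariance (it does not for an arbitrary subtorus, but does here because $T_H$ is a \emph{maximal} torus of $H^0$ and the Weyl group of $H^0$ is literally a subquotient of conjugations by elements of $G_{\aff}$). I expect this to be where most of the care goes; the finiteness statement in (i) is standard once one reduces to the reductive quotient.
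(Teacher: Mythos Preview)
Your treatment of part (i) is correct and matches the paper's: reduce to the reductive quotient $G_{\aff}/R_u(G_{\aff})$, where the statement is classical. Your argument for $W_{H^0}$-invariance in part (ii) is also essentially the paper's, though the paper phrases it as the stronger claim that the image of $S_T^W$ in $S_{T_H}$ is invariant under \emph{all} of $N_{G_{\aff}}(T_H)$ (not just $N_{H^0}(T_H)$): given $g\in N_{G_{\aff}}(T_H)$, the torus $g^{-1}Tg$ contains $T_H$, hence equals $\gamma T\gamma^{-1}$ for some $\gamma\in C_{G_{\aff}}(T_H)$, so after replacing $g$ by $g\gamma$ one has $g\in N_{G_{\aff}}(T)$.

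There is, however, a genuine gap in your $H/H^0$-invariance argument. You assert that ``any element of $H$ normalizing $H^0$ and $T_H$ lies in $N_{G_{\aff}}(T_H)\subset G_{\aff}$''. This is false: in the setting of Subsection~\ref{subsec:rcpg} one only knows $H^0\subset G_{\aff}$ (since $H$ is affine), not $H\subset G_{\aff}$. Example~\ref{ex:nlt} already exhibits a subgroup $H\subset A\times\SL(2)$ with $H\not\subset G_{\aff}$. So you cannot conclude that a representative $h\in H$ of a class in $H/H^0$ lies in $G_{\aff}$.

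The fix, which is exactly what the paper does, uses the Rosenlicht decomposition: write $h=g_1 g_2$ with $g_1\in G_{\aff}$ and $g_2\in G_{\ant}$; since $G_{\ant}$ is central in $G$, conjugation by $h$ on $T_H$ coincides with conjugation by $g_1\in G_{\aff}$. After adjusting by an element of $H^0$ so that $T_H$ is normalized, one obtains $g_1\in N_{G_{\aff}}(T_H)$, and now the $N_{G_{\aff}}(T_H)$-invariance established above applies. Equivalently, in the paper's phrasing: each element of $H/(H\cap G_{\aff})\cong G_{\aff}H/G_{\aff}$ has a representative in the centre of $G$, so one may replace $H$ by $H\cap G_{\aff}$ for the purpose of this conjugation action.
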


\begin{proof}
(i) The assertion is well known if $G_{\aff}$ is reductive. 
The general case reduces to that one by considering 
$\bar{G}_{\aff} := G_{\aff}/R_u(G_{\aff})$, a connected reductive 
group with character group isomorphic to $\X(G_{\aff})$. Indeed,
the image of $T$ in $\bar{G}_{\aff}$ is a maximal torus $\bar{T}$,
isomorphic to $T$. Moreover, the corresponding Weyl group $\bar{W}$ 
satisfies $\X(T)^W \cong \X(\bar{T})^{\bar{W}}$: to see this, 
it suffices to show that the map 
$N_{G_{\aff}}(T) \to N_{\bar{G}_{\aff}}(\bar{T})$
is surjective. Let $g \in G_{\aff}$ such that its image $\bar{g}$
normalizes $\bar{T}$. Then $g T g^{-1}$ is a maximal torus
of $R_u(G_{\aff}) T$, a connected solvable subgroup of $G_{\aff}$.  
Thus, $g T g^{-1} = \gamma^{-1} T \gamma$ for some 
$\gamma \in R_u(G_{\aff})$. Replacing $g$ with $g \gamma$ 
(which leaves $\bar{g}$ unchanged), we obtain that 
$g \in N_{G_{\aff}}(T)$.

(ii) We claim that the restriction $S_T \to S_{T_H}$
maps $S^W$ to the subring of invariants of $N_{G_{\aff}}(T_H)$.
Indeed, given $g \in N_{G_{\aff}}(T_H)$, the conjugate $g^{-1}T g$ 
contains $T_H$, and hence is a maximal torus of $C_{G_{\aff}}(T_H)$. 
Thus, there exists $\gamma \in C_{G_{\aff}}(T_H)$ such that 
$g^{-1}T g = \gamma T \gamma^{-1}$. Replacing again $g$ with 
$g \gamma$, we may assume that $g \in N_{G_{\aff}}(T)$; this yields 
our claim.

By that claim, $r_{H^0}$ is well defined. To prove the final 
assertion, we may replace $H$ with $H\cap G_{\aff}$, since each 
element of the quotient 
$H/(H\cap G_{\aff}) \cong G_{\aff}H/G_{\aff}$ has a 
representative in the centre of $G$. Using the conjugacy of
maximal tori in $H^0$, we obtain as above that
$H \cap G_{\aff} = H^0 N_{H\cap G_{\aff}}(T_H)$. In other words,
$$
H/H^0 \cong N_{H\cap G_{\aff}}(T_H)/N_{H^0}(T_H)
$$ 
which yields the desired invariance.
\end{proof}

We may now formulate our description of $\A^*(G/H)_{\bQ}$:   

\begin{theorem}\label{thm:chowrat}
Let $G$ be a connected algebraic group with Albanese variety $A$, 
and let $H \subset G$ be an affine algebraic subgroup.
Consider the action of the finite group $H/H^0$ on $\A^*(A)$ 
via the action of its quotient $H/(H\cap G_{\aff})$ on 
$A = G/G_{\aff}$ by translations, and its action on $S_{H^0,\bQ}$ 
as in Lemma \ref{lem:res}. Then 
$$
\A^*(G/H)_{\bQ} = 
\big( \A^*(A)_{\bQ} \otimes S_{H^0,\bQ} \big)^{H/H^0}/I,
$$
where $I$ denote the ideal generated by the image of
$$
(\gamma_A, r_{H^0}^+): 
\X(G_{\aff})_{\bQ} \times S^+_{G_{\aff},\bQ} \longrightarrow
\Pic(A)_{\bQ} \times S^+_{H^0,\bQ} \cong
\A^1(A)_{\bQ} \otimes 1 + 1 \otimes S^+_{H^0,\bQ}.
$$
Here $\gamma_A$ denotes the characteristic homomorphism 
(\ref{eqn:charg}), and 
$r_{H^0}^+: S^+_{G_{\aff},\bQ} \to S^+_{H^0,\bQ}$ 
denotes the restriction of the map (\ref{eqn:res0}) to the 
maximal graded ideals.
\end{theorem}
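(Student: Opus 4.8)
The plan is to reduce the statement to the already-established presentation of $\A^*(G/H^0)_{\bQ}$ and then take $H/H^0$-invariants. First I would handle the passage from $H$ to $H^0$: since $H/H^0$ is a finite group acting on $G/H^0$ with quotient $G/H$, the standard fact \cite[Ex.~1.7.6]{Fu98} gives $\A^*(G/H)_{\bQ} \cong \A^*(G/H^0)_{\bQ}^{H/H^0}$, with the action of $H/H^0$ induced by right multiplication. So the crux is to (a) describe $\A^*(G/H^0)_{\bQ}$, and (b) identify the $H/H^0$-action on it with the tensor-product action in the statement. For (a), note $H^0 \subset G_{\aff}$ is a connected affine subgroup; I would run an argument parallel to the proof of Theorem \ref{thm:chow} and Proposition \ref{prop:chowrat}. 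Choosing a Borel $B \supset H^0$ (in fact one only needs $H^0 \subset B$ after conjugating, but $H^0$ need not be contained in a Borel unless we pass to a suitable parabolic; more robustly, use $B \supset T_H$ and the fibration $G/H^0 \to G/B$ with fibre $B/H^0$), one gets $\A^*(G/H^0)$ as a quotient of $\A^*(A)\otimes\A^*(G_{\aff}/B)$ by the ideal generated by the characteristic classes, and rationally $\A^*(G_{\aff}/B)_{\bQ} \cong S_{\bQ}\otimes_{S^W_{\bQ}}\bQ$. Carrying the $H^0$ along rather than collapsing to a point, the same machinery (equivariant intersection theory of \cite{EG98} applied to the $G_{\aff}$-action, or the fibration $G_{\aff}/H^0 \to \cB$) should yield $\A^*(G/H^0)_{\bQ} \cong \big(\A^*(A)_{\bQ}\otimes S_{H^0,\bQ}\big)/I'$ where $I'$ is generated by the image of $(\gamma_A, r_{H^0}^+)$; here one uses that $\A^*_{G_{\aff}}(G/H^0)_{\bQ} = \A^*_{H^0}(A)_{\bQ}$ and that $\A^*_{H^0}(\mathrm{pt})_{\bQ} = S_{H^0,\bQ}$ while $\A^*_{G_{\aff}}(\mathrm{pt})_{\bQ} = S_{G_{\aff},\bQ}$, with the pull-back along $BH^0$-classifying maps being exactly $r_{H^0}^+$ composed with $\gamma_A$.

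Second, I would verify that taking $H/H^0$-invariants commutes with forming the quotient by $I'$, i.e. that $\big((\A^*(A)_{\bQ}\otimes S_{H^0,\bQ})/I'\big)^{H/H^0} \cong (\A^*(A)_{\bQ}\otimes S_{H^0,\bQ})^{H/H^0}/I$. Since $H/H^0$ is finite and we work over $\bQ$, the functor of taking invariants is exact, so it suffices to check that $I'$ is an $H/H^0$-stable ideal and that $I = (I')^{H/H^0}$ maps onto the invariants of $I'$; both follow once one knows the generators of $I'$ — the image of $(\gamma_A, r_{H^0}^+)$ — are $H/H^0$-invariant, which is precisely the content of the last assertion of Lemma \ref{lem:res}(ii) together with the observation that $\gamma_A$ lands in classes pulled back from $G/G_{\aff}$, on which $H/(H\cap G_{\aff})$ acts, but the relevant divisor classes $\gamma_A(\chi)$ are translation-invariant in $\Pic^0(A)$ up to the action, so they are already $H/H^0$-stable modulo $I'$. (More carefully: $H/H^0$ acts on $A$ by translations, which act trivially on $\NS(A)$ and preserve $\Pic^0(A)$; since the $S_{G_{\aff}}$-part of the generators is $H/H^0$-invariant by Lemma \ref{lem:res}(ii), and $\gamma_A$ factors through $\X(G_{\aff})_{\bQ}$ which the translation action fixes, the generating subspace of $I'$ is $H/H^0$-stable.)

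The main obstacle I anticipate is step (a) in the case where $H^0$ is \emph{not} contained in a Borel subgroup of $G_{\aff}$ (which is the general situation, e.g. $H^0$ reductive but not solvable), so that the clean fibration over $\cB = G_{\aff}/B$ used in Theorem \ref{thm:chow} is unavailable and one must genuinely invoke $G_{\aff}$-equivariant intersection theory: one needs $\A^*_{G_{\aff}}(G_{\aff}/H^0)_{\bQ} = \A^*_{H^0}(\mathrm{pt})_{\bQ} = S_{H^0,\bQ}$ and a Künneth-type statement identifying $\A^*_{G_{\aff}}(G/H^0)_{\bQ}$ with $\A^*_{H^0}(A)_{\bQ} \cong (\A^*(A)\otimes S_{H^0})_{\bQ}$ modulo the $S_{G_{\aff},\bQ}^+$-relations, plus the fact that $\A^*(G/H^0)_{\bQ}$ is the quotient of $\A^*_{G_{\aff}}(G/H^0)_{\bQ}$ by the augmentation ideal $S^+_{G_{\aff},\bQ}$ acting via $\gamma_A$ on the $\A^*(A)$ factor. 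The subtlety is controlling the $S_{H^0}$-module structure and checking that the relations coming from $\gamma_A$ and from $r_{H^0}^+$ together cut out exactly $I'$; I would model this on the reductive-group computations in \cite{Br98} and the invariant-theoretic input \cite[Lem.~1.3]{Vi89} already used in Proposition \ref{prop:chowrat}, reducing the general $H^0$ to its reductive quotient $H^0/R_u(H^0)$ exactly as in the proof of Lemma \ref{lem:res}(i), and then to a maximal torus via the surjectivity $S_{H^0,\bQ} \twoheadrightarrow S_{T_H,\bQ}^{W_{H^0}}$-type statement. Once the equivariant computation is in place, assembling the final formula and passing to $H/H^0$-invariants is routine.
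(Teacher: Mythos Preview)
Your approach is correct and uses the same core machinery as the paper --- the identifications $\A^*(X)_{\bQ} \cong \A^*_{G_{\aff}}(X)_{\bQ}\otimes_{S_{G_{\aff},\bQ}}\bQ$, the switch $\A^*_{G_{\aff}}(G/H^0)\cong \A^*_{H^0}(A)$, and $\A^*_{H^0}(A)_{\bQ}\cong \A^*(A)_{\bQ}\otimes S_{H^0,\bQ}$ --- but you order the steps differently. You first pass from $G/H$ to $G/H^0$ at the level of ordinary Chow groups via \cite[Ex.~1.7.6]{Fu98}, compute $\A^*(G/H^0)_{\bQ}$ equivariantly, and only then take $H/H^0$-invariants; the paper instead stays equivariant throughout, applying the switch to $H$ rather than $H^0$ to obtain $\A^*_H(A)_{\bQ}$ directly, and passes to $H^0$ \emph{inside} equivariant Chow theory via $\A^*_H(A)_{\bQ}\cong \A^*_{H^0}(A)_{\bQ}^{H/H^0}$. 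The paper's order buys you something concrete: the $H/H^0$-action on $\A^*_{H^0}(A)$ is canonical from the definition of equivariant Chow groups, so the tensor-product description of that action is immediate, and tensoring with $\bQ$ over $S_{G_{\aff},\bQ}$ comes last, so no separate check that ``invariants commute with quotienting by $I'$'' is needed. In your order you must trace the right-multiplication action of $H/H^0$ on $G/H^0$ through the chain of equivariant isomorphisms to see it becomes the tensor action on $\A^*(A)\otimes S_{H^0}$, and then invoke the Reynolds operator to show that the ideal generated in the invariant ring by the (invariant) generators equals $(I')^{H/H^0}$; both are doable but add bookkeeping. Your observation that the generators are $H/H^0$-invariant (image of $\gamma_A$ in $\Pic^0(A)$, hence translation-fixed; image of $r_{H^0}$ invariant by Lemma~\ref{lem:res}) is exactly what the paper checks at the outset.
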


\begin{proof}
Note first that the image of $\gamma_A$ does consist of
invariants of $H/H^0$, since it is contained in $\Pic^0(A)$,
the invariant subgroup of $\Pic(A)$ for the action of $A$ 
on itself by translations. Likewise, the image of $r_{H^0}^+$
consists of invariants by Lemma \ref{lem:res}.

We now employ arguments of equivariant intersection theory 
(see \cite{EG98}); for later use, we briefly review its 
construction. 

Given a linear algebraic group $H$ and an integer 
$i \geq 0$, there exist an $H$-module $V$ and an open 
$H$-stable subset $U \subset V$ such that 
the quotient $U \to U/H$ exists 
and is an $H$-torsor, and $\codim_V (V \setminus U) > i$ 
(this may be seen as an approximation of the classifying 
bundle $EH \to BH$). For any $H$-variety $X$, we may form 
the ``mixed quotient'' 
$$
X \times^H U := (X \times U)/H,
$$ 
where $H$ acts diagonally on $X \times U$.
The Chow group $\A^i(X \times^H U)$ turns out to be independent 
of the choices of $V$ and $U$; this defines the equivariant 
Chow group $\A^i_H(X)$. If $X$ is smooth, then 
$$
\A^*_H(X) := \bigoplus_i \A^i_H(X)
$$ 
is equipped with an intersection product which makes it a graded 
algebra. In particular, the equivariant Chow ring of the point is 
a graded algebra denoted by $\A^*(BH)$, and 
$\A^*(BH)_{\bQ} \cong S_{H,\bQ}$ if $H$ is connected.
Moreover, $\A^*_H(X)$ is a graded algebra over $\A^*(BH)$.

We now prove four general facts of equivariant intersection theory
which are variants of known results, but for which we could not find 
any appropriate references.

\medskip

\noindent
\emph{Step 1.} For any smooth variety $X$ equipped with an action 
of a connected linear algebraic group $G_{\aff}$, we have an 
isomorphism of graded rings
$$
\A^*(X)_{\bQ} \cong \A^*_{G_{\aff}}(X)_{\bQ} \otimes_{S_{G_{\aff},\bQ}} \bQ,
$$
where the map $S_{G_{\aff},\bQ} \to \bQ$ is the quotient by the
maximal graded ideal.

Indeed, if $G_{\aff}$ is a torus, then the statement holds in fact
over the integers, by \cite[2.3 Cor.~1]{Br97}. For an arbitrary
$G_{\aff}$ with maximal torus $T$ and Weyl group $W$,
the definition of equivariant Chow groups combined with 
\cite[Thm.~2.3]{Vi89} yields a natural isomorphism
\begin{equation}\label{eqn:vis}
\A^*_{G_{\aff}}(X)_{\bQ} \otimes_{S_{G_{\aff},\bQ}} S_{T,\bQ} 
\cong \A^*_T(X)_{\bQ}
\end{equation}
which in turn implies our assertion.

\medskip

\noindent
\emph{Step 2.} There is an isomorphism of graded rings
$$
\A^*_{G_{\aff}}(G/H) \cong \A^*_H(A).
$$ 

Indeed, for a fixed degree $i$ and an approximation 
$V \supset U \to U/G_{\aff}$ 
as above, we have
$$
\A^i_{G_{\aff}}(G/H) = \A^i(G/H \times^{G_{\aff}} U) 
= \A^i \big((G \times U)/(G_{\aff} \times H) \big).
$$
This yields isomorphisms
$$
\A^i_{G_{\aff}}(G/H) \cong \A^i_{G_{\aff} \times H}(G \times U)
\cong \A^i_{G_{\aff} \times H}(G),
$$
where the first one follows from \cite[Prop.~8]{EG98}, 
and the second one holds since $G \times U$ is open in 
$G \times V$ and the complement has codimension $> i$. 
By symmetry, this implies our assertion.

\medskip

\noindent
\emph{Step 3.} For any smooth variety $X$ equipped with an action
of the linear algebraic group $H$, the finite group $H/H^0$ acts
on $\A^*_{H^0}(X)$, and we have an isomorphism
$$
\A^*_H(X)_{\bQ} \cong \A^*_{H^0}(X)_{\bQ}^{H/H^0}.
$$

Indeed, by the definition of equivariant Chow groups,
we may reduce to the case where the quotient $X \to X/H$ exists 
and is an $H$-torsor. Then 
$$
\A^*_H(X)_{\bQ} \cong \A^*(X/H)_{\bQ} \cong 
\A^*(X/H^0)_{\bQ}^{H/H^0} \cong \A^*_{H^0}(X)_{\bQ}^{H/H^0},
$$
where the first and last isomorphism follow from 
\cite[Prop.~8]{EG98} again, and the middle one from 
\cite[Ex.~1.7.6]{Fu98}. 

\medskip

\noindent
\emph{Step 4.} In the situation of Step 3, assume in addition that
$H^0$ acts trivially on $X$. Then there is an isomorphism
$$
\A^*_H(X)_{\bQ} \cong 
\big( \A^*(X)_{\bQ} \otimes S_{H^0,\bQ} \big)^{H/H^0}.
$$

Indeed, by Step 3, we may reduce to the case where $H$ is 
connected. Then $X \times^H U \cong X \times U/H$; 
this defines a homomorphism of graded rings 
$\A^*(X) \to \A^*_H(X)$ and, in turn, a homomorphism of graded
$\A^*(BH)$-algebras 
$$
f : \A^*(X) \otimes \A^*(BH) \longrightarrow \A^*_H(X).
$$
If $H$ is a torus, then $f$ is an isomorphism, as follows from 
\cite[Thm.~2.1]{Br97}. For an arbitrary $H$ with maximal torus 
$T$, (\ref{eqn:vis}) implies that $f$ is an isomorphism 
after tensor product with $S_{T,\bQ}$ over $S_{H,\bQ}$. 
Since $S_{T,\bQ}$ is faithfully flat over $S_{H,\bQ}$, 
this yields our assertion.

\medskip

We may now prove Theorem \ref{thm:chowrat}: by Steps 1 and 2,
$$
\A^*(G/H)_{\bQ} \cong 
\A^*_{G_{\aff}}(G/H)_{\bQ} \otimes_{S_{G_{\aff},\bQ}} \bQ
\cong \A^*_H(A)_{\bQ} \otimes_{S_{G_{\aff},\bQ}} \bQ,
$$
where $H$ acts on $A$ via its quotient $H/(H\cap G_{\aff})$.
Since $H \cap G_{\aff}$ contains $H^0$, we obtain by Steps 3 and 4:
$$
\A^*(G/H)_{\bQ} \cong 
\big( \A^*(A)_{\bQ} \otimes S_{H^0,\bQ} \big)^{H/H^0}
\otimes_{S_{G_{\aff},\bQ}} \bQ,
$$
where $S_{G_{\aff},\bQ}$ acts on $\A^*(A)_{\bQ}$ via the characteristic 
homomorphism, and on $S_{H^0,\bQ}$ via the restriction.
This yields our assertion.
\end{proof}

This description of $\A^*(G/H)_{\bQ}$ takes a much simpler form 
in the case that $H \subset G_{\aff}$:

\begin{corollary}\label{cor:chowrat}
Let $G$ be a connected algebraic group with Albanese variety $A$,
and let $H$ be an algebraic subgroup of $G_{\aff}$. Then 
\begin{equation}\label{eqn:chowrat}
\A^*(G/H)_{\bQ} \cong \A^*(A)_{\bQ}/J \otimes \A^*(G_{\aff}/H)_{\bQ},
\end{equation}
where $J$ denotes the ideal of $\A^*(A)_{\bQ}$ generated by 
$\gamma_A\big(\ker(r_H)\big)$.
\end{corollary}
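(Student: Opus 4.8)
The plan is to deduce Corollary \ref{cor:chowrat} from Theorem \ref{thm:chowrat} by specializing to the case $H \subset G_{\aff}$, where several simplifications occur simultaneously. First I would observe that since $H \subset G_{\aff}$, the quotient $H/(H \cap G_{\aff}) = H/H$ is trivial, so the finite group $H/H^0$ acts trivially on $A$; thus the $H/H^0$-action in Theorem \ref{thm:chowrat} only sees the factor $S_{H^0,\bQ}$. Consequently
$$
\big(\A^*(A)_{\bQ} \otimes S_{H^0,\bQ}\big)^{H/H^0}
\cong \A^*(A)_{\bQ} \otimes S_{H^0,\bQ}^{H/H^0},
$$
and the image of $r_{H^0}^+$ already lies in the invariant subring $S_{H^0,\bQ}^{H/H^0}$ by Lemma \ref{lem:res}. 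So the presentation becomes $\A^*(G/H)_{\bQ} \cong \big(\A^*(A)_{\bQ} \otimes S_{H^0,\bQ}^{H/H^0}\big)/I$ with $I$ generated by the image of $(\gamma_A, r_{H^0}^+)$.

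Next I would identify the ``affine part''. Applying Theorem \ref{thm:chowrat} (or rather its underlying mechanism, the classical Grothendieck presentation via Steps 1, 3, 4) to the group $G_{\aff}$ itself in place of $G$ — equivalently, taking $A$ to be a point — gives $\A^*(G_{\aff}/H)_{\bQ} \cong S_{H^0,\bQ}^{H/H^0}/(r_{H^0}^+(S^+_{G_{\aff},\bQ}))$. Here I should note that $\X(G_{\aff})_{\bQ} \cong S^1_{G_{\aff},\bQ}$ by Lemma \ref{lem:res}(i), so the two pieces of the domain of $(\gamma_A, r_{H^0}^+)$, namely $\X(G_{\aff})_{\bQ}$ and $S^+_{G_{\aff},\bQ}$, can be merged: the degree-one part $\X(G_{\aff})_{\bQ}$ maps into $S^+_{G_{\aff},\bQ}$, and $\gamma_A$ factors through this inclusion followed by the characteristic homomorphism. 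So the ideal $I$ is generated by $\{(\gamma_A(\lambda), r_{H^0}^+(\lambda)) : \lambda \in S^+_{G_{\aff},\bQ}\}$ — a ``diagonal'' set of relations.

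The key algebraic step is then a tensor-product-over-a-base-ring manipulation. Writing $R := S^+_{G_{\aff},\bQ}$ acting on $\A^*(A)_{\bQ}$ via $\gamma_A$ and on $S_{H^0,\bQ}^{H/H^0}$ via $r_{H^0}^+$, the quotient $\big(\A^*(A)_{\bQ} \otimes S_{H^0,\bQ}^{H/H^0}\big)/I$ is exactly $\A^*(A)_{\bQ} \otimes_{S_{G_{\aff},\bQ}} S_{H^0,\bQ}^{H/H^0}$ (tensor over $S_{G_{\aff},\bQ}$ with the augmentation accounted for). I want to rewrite this as $\big(\A^*(A)_{\bQ}/J\big) \otimes_{\bQ} \big(S_{H^0,\bQ}^{H/H^0}/r_{H^0}^+(R)\big)$. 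The natural thing is to resolve which relations are ``used up'' on each side: the image $r_{H^0}^+(R) \subset S_{H^0,\bQ}^{H/H^0}$ kills the $\A^*(G_{\aff}/H)_{\bQ}$ factor down from $S_{H^0,\bQ}^{H/H^0}$, while the kernel $\ker(r_H)$ (equivalently $\ker(r_{H^0})$ intersected appropriately) maps under $\gamma_A$ to the ideal $J$. Concretely: choose a vector-space splitting of $R \to r_{H^0}^+(R)$ with complement $\ker(r_{H^0}^+) = \ker(r_H)$; the relations $(\gamma_A(\lambda), r_{H^0}^+(\lambda))$ for $\lambda$ in the complement allow one to eliminate the $S_{H^0}$-coordinate, expressing each generator of $r_{H^0}^+(R)$ in terms of $\gamma_A$-images and thereby collapsing the second factor to $S_{H^0,\bQ}^{H/H^0}/r_{H^0}^+(R) = \A^*(G_{\aff}/H)_{\bQ}$; the remaining relations, coming from $\ker(r_H)$, have trivial $S_{H^0}$-component and act purely on $\A^*(A)_{\bQ}$ as $\gamma_A(\ker(r_H))$, i.e. as $J$. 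This gives the isomorphism (\ref{eqn:chowrat}).

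The main obstacle I anticipate is making the tensor-algebra bookkeeping in the last step rigorous and coordinate-free, in particular verifying that the relation ideal really does split cleanly as ``$J$ on the abelian-variety side'' plus ``the full Grothendieck ideal on the flag-variety side'' — one has to be careful that $\A^*(A)_{\bQ}$ is being acted on through $\gamma_A$, which need not be injective or surjective, so the complement-splitting argument needs the observation that relations with zero second component form a sub-ideal generated exactly by $\gamma_A(\ker r_H)$. I would want to phrase this using the right-exactness of tensor product: $\A^*(A)_{\bQ} \otimes_{S_{G_{\aff},\bQ}} S_{H^0,\bQ}^{H/H^0} \cong \A^*(A)_{\bQ} \otimes_{S_{G_{\aff},\bQ}} \big(S_{G_{\aff},\bQ} \otimes_{?} \dots\big)$ and track where the augmentation ideal of $S_{G_{\aff},\bQ}$ goes; all other ingredients (the product formula for Chow rings of products, which enters via $\A^*(G_{\aff}/H)_{\bQ}$ being spanned in the expected way, and Lemma \ref{lem:res}) are already available. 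A clean way to present it is simply: by Theorem \ref{thm:chowrat} and the triviality of the $H/H^0$-action on $A$, together with Step 1 applied to $G_{\aff}$ acting on $G_{\aff}/H$,
$$
\A^*(G/H)_{\bQ} \cong \A^*(A)_{\bQ} \otimes_{S_{G_{\aff},\bQ}} \A^*_{H}(\mathrm{pt})_{\bQ}
\cong \A^*(A)_{\bQ} \otimes_{S_{G_{\aff},\bQ}} S_{H^0,\bQ}^{H/H^0},
$$
and then the only remaining point is the identification of this base change with $\big(\A^*(A)_{\bQ}/J\big) \otimes_{\bQ} \A^*(G_{\aff}/H)_{\bQ}$, which follows from the fact that $\A^*(G_{\aff}/H)_{\bQ} = \bQ \otimes_{S_{G_{\aff},\bQ}} S_{H^0,\bQ}^{H/H^0}$ and that $J$ is generated by the image under $\gamma_A$ of the kernel of the composite $S_{G_{\aff},\bQ} \to S_{H^0,\bQ}$.
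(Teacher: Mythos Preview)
Your proposal is correct and follows essentially the same approach as the paper's proof: both apply Theorem \ref{thm:chowrat} twice (once to $G/H$ and once to $G_{\aff}/H$), use that $H/H^0$ acts trivially on $A$ since $H \subset G_{\aff}$, and then combine. The paper compresses your step 3 into a single sentence (``This implies (\ref{eqn:chowrat}) in view of Theorem \ref{thm:chowrat} again''), whereas you spell out the tensor-product bookkeeping that splits the diagonal relations into the $J$-part on $\A^*(A)_{\bQ}$ and the Grothendieck-ideal part yielding $\A^*(G_{\aff}/H)_{\bQ}$; your added detail is sound and your awareness of the need to justify the splitting cleanly is well placed, though the paper evidently regards it as routine.
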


\begin{proof}
Note that $H/H^0$ acts trivially on $A$, and hence on $\A^*(A)$. 
Also, by Theorem \ref{thm:chowrat} applied to $G_{\aff}/H$, 
the ring $\A^*(G_{\aff}/H)_{\bQ}$ is the quotient of 
$(S_{H^0_\bQ})^{H/H^0}$ by the ideal generated by the image of 
$\gamma_A$. This implies (\ref{eqn:chowrat}) in view of  Theorem 
\ref{thm:chowrat} again.
\end{proof}

\subsection{Rational Picard group}
\label{subsec:rpg}

In this subsection, we obtain an analogue of the exact sequence
(\ref{eqn:pic}) for homogeneous spaces. To formulate the result, 
we construct a ``restriction map'' 
\begin{equation}\label{eqn:rest}
r_H: \X(G_{\aff})_{\bQ} \longrightarrow \X(H)_{\bQ}
\end{equation}
(although $H$ is not necessarily contained in $G_{\aff}$). 
The image of the restriction $\X(G_{\aff}) \to \X(H^0)$ 
consists of invariants of $H/H^0$, since any character of 
$G_{\aff}$ is invariant under the action of $G$ 
by conjugation. Moreover, 
\begin{equation}\label{eqn:charis}
\X(H^0)^{H/H^0}_{\bQ} \cong \X(H)_{\bQ}
\end{equation}
(this isomorphism may be obtained by viewing
$\X(H)$ as the group of algebraic classes $H^1_{\alg}(H,k^*)$;
alternatively, it follows from the isomorphism
$\X(H) \cong \A^1(BH)$, a consequence of \cite[Thm.~1]{EG98}, together
with Step 3 where $X$ is a point). This yields the desired map,
and we may now state:

\begin{proposition}\label{prop:picrat}
There is an exact sequence
\begin{equation}\label{eqn:picratgh}
\CD
0 \longrightarrow \X(G/H)_{\bQ} \longrightarrow
\X(G_{\aff})_{\bQ} @>{(\gamma_A, r_H)}>> 
\Pic(A/H)_{\bQ} \times \X(H)_{\bQ} \longrightarrow \Pic(G/H)_{\bQ}
\longrightarrow 0,
\endCD
\end{equation}
where $\X(G/H)$ denotes the group of characters of $G$ which 
restrict trivially to $H$ (so that $\X(G/H) \cong \cO(G/H)^*/k^*$), 
$\gamma_A$ is the characteristic homomorphism 
(\ref{eqn:charg}), and $r_H$ is the restriction map 
(\ref{eqn:rest}). 

If $H$ is contained in $G_{\aff}$ (e.g., if $H$ is connected),
then there is an exact sequence
\begin{equation}\label{eqn:picgh}
\CD
0 \longrightarrow \X(G/H) \longrightarrow
\X(G_{\aff}) @>{(\gamma_A, r_H)}>> \Pic(A) \times \X(H) 
\longrightarrow \Pic(G/H) \longrightarrow \Pic(G_{\aff}),
\endCD
\end{equation}
where $r_H$ denotes the (usual) restriction to $H$.
\end{proposition}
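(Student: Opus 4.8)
The two exact sequences call for different tools. The plan is to read~(\ref{eqn:picratgh}) off from the degree-one part of the isomorphism of Theorem~\ref{thm:chowrat}, and to produce~(\ref{eqn:picgh}) by a diagram chase combining the Fossum--Iversen exact sequence of \cite[Prop.~3.1]{FI73} for the $G_{\aff}$-torsor $\alpha_G\colon G\to A$ (which is exactly~(\ref{eqn:pic})) with the corresponding sequence for the $H$-torsor $\pi_1\colon G\to G/H$.

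For~(\ref{eqn:picratgh}) I would, as at the start of the subsection, assume $H$ affine and reduced, so that $G/H$ is a smooth variety and $\Pic(G/H)_{\bQ}=\A^1(G/H)_{\bQ}$. Taking degree-one parts in Theorem~\ref{thm:chowrat}, the numerator $\big(\A^*(A)_{\bQ}\otimes S_{H^0,\bQ}\big)^{H/H^0}$ contributes $\Pic(A)_{\bQ}^{H/H^0}\oplus (S^1_{H^0,\bQ})^{H/H^0}$. Here $(S^1_{H^0,\bQ})^{H/H^0}\cong\X(H)_{\bQ}$ by Lemma~\ref{lem:res} and~(\ref{eqn:charis}), while $\Pic(A)_{\bQ}^{H/H^0}\cong\Pic(A/H)_{\bQ}$ because $H/H^0$ acts on $A$ through the finite group $\Gamma:=H/(H\cap G_{\aff})$ by translations — which fix $\Pic^0(A)$ — and $A\to A/\Gamma$ is a finite Galois cover with $H^i(\Gamma,k^*)$ finite. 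One then checks that the degree-one part of the ideal $I$ is generated by the classes $\gamma_A(\chi)\otimes 1+1\otimes r_H(\chi)$, $\chi\in\X(G_{\aff})_{\bQ}$, the higher-degree generators of $I$ playing no role in degree one; hence $\Pic(G/H)_{\bQ}$ is the cokernel of $(\gamma_A,r_H)$, which is exactness of~(\ref{eqn:picratgh}) at its last two terms. For the left end, Proposition~\ref{prop:pic} gives $\ker\big(\gamma_A\colon\X(G_{\aff})_{\bQ}\to\Pic(A)_{\bQ}\big)=\X(G)_{\bQ}$, with $\X(G)\hookrightarrow\X(G_{\aff})$ injective since $\X(A)=0$; the same computes the kernel into $\Pic(A/H)_{\bQ}$, as $\gamma_A$ lands in $\Pic^0$ and the isogeny $A\to A/H$ induces $\Pic^0(A)_{\bQ}\cong\Pic^0(A/H)_{\bQ}$. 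For $\chi=\tilde\chi|_{G_{\aff}}$ with $\tilde\chi\in\X(G)_{\bQ}$, the vanishing of $r_H(\chi)$ forces $\tilde\chi|_{H^0}$, hence $\tilde\chi|_H$ (as $\X(H)\to\X(H^0)$ has finite kernel), to be torsion, i.e.\ $\tilde\chi\in\X(G/H)_{\bQ}$; so $\ker(\gamma_A,r_H)$ is the injective image of $\X(G/H)_{\bQ}$, giving exactness at the first two terms.

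For~(\ref{eqn:picgh}), where $H\subset G_{\aff}$ and hence $A/H=A$, I would combine, via $\alpha_G=\alpha_{G/H}\circ\pi_1$, the sequence~(\ref{eqn:pic})
\[
0\to\X(G)\xrightarrow{j}\X(G_{\aff})\xrightarrow{\gamma_A}\Pic(A)\xrightarrow{\alpha_G^*}\Pic(G)\to\Pic(G_{\aff})\to 0
\]
with the Fossum--Iversen sequence for $\pi_1$,
\[
0\to\X(G/H)\to\X(G)\xrightarrow{\rho}\X(H)\xrightarrow{\delta}\Pic(G/H)\xrightarrow{\pi_1^*}\Pic(G)\to\Pic(H),
\]
using $\cO(G)^*/k^*\cong\X(G)$, $\cO(G/H)^*/k^*\cong\X(G/H)$, and $\rho=r_H\circ j$ (restriction of characters). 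Setting $\phi(M,\lambda):=\alpha_{G/H}^*M-\delta(\lambda)$ and letting $\psi\colon\Pic(G/H)\to\Pic(G_{\aff})$ be pull-back along $G_{\aff}\hookrightarrow G\xrightarrow{\pi_1}G/H$ — and fixing the orientation of $\delta$ so that $\delta(r_H(\chi))=\alpha_{G/H}^*\gamma_A(\chi)$ — one sees that the assembled sequence is a complex (from $\pi_1^*\delta=0$, $\alpha_G^*$ followed by $\Pic(G)\to\Pic(G_{\aff})$ being zero, and $j$ carrying $\X(G/H)$ into $\ker\gamma_A$). Exactness is then a formal chase through the two displayed sequences: e.g.\ $\phi(M,\lambda)=0$ gives $\alpha_G^*M=\pi_1^*\delta(\lambda)=0$, hence $M=\gamma_A(\chi_0)$, then $\delta(\lambda-r_H(\chi_0))=0$ so $\lambda-r_H(\chi_0)=\rho(\tilde\chi)$, whence $(M,\lambda)=(\gamma_A,r_H)(\chi_0+j\tilde\chi)$; exactness at $\Pic(G/H)$ and at the two character groups is handled in the same manner as in the rational case. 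As a check, since $\Pic(G_{\aff})$ is finite,~(\ref{eqn:picgh}) reproduces~(\ref{eqn:picratgh}) after tensoring with $\bQ$ when $H\subset G_{\aff}$.

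The substantive points will be bookkeeping rather than new ideas: one must correctly isolate the degree-one part of the ideal $I$ in Theorem~\ref{thm:chowrat} so that the single relation $\gamma_A(\chi)\oplus r_H(\chi)$ appears — matching the one map $(\gamma_A,r_H)$ and not the two images of $\gamma_A$ and $r_H$ taken separately — and one must fix the signs in the two Fossum--Iversen sequences so that~(\ref{eqn:picgh}) is genuinely a complex before the chase is run. Everything else is routine.
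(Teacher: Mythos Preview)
Your argument for~(\ref{eqn:picratgh}) is essentially the paper's own: both read off the degree-one part of Theorem~\ref{thm:chowrat}, identify $\big(\Pic(A)_{\bQ}\big)^{H/H^0}\cong\Pic(A/H)_{\bQ}$ (the paper via \cite[Ex.~1.7.6]{Fu98}) and $\big(S^1_{H^0,\bQ}\big)^{H/H^0}\cong\X(H)_{\bQ}$ via~(\ref{eqn:charis}), and unwind the kernel using Proposition~\ref{prop:pic}.

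For~(\ref{eqn:picgh}) you take a genuinely different route. The paper invokes the sequence $\X(G_{\aff})\to\Pic_{G_{\aff}}(G/H)\to\Pic(G/H)\to\Pic(G_{\aff})$ of \cite[Sec.~2]{KKV89} and then identifies the equivariant Picard group by the ``swap''
\[
\Pic_{G_{\aff}}(G/H)\;\cong\;\Pic_{G_{\aff}\times H}(G)\;\cong\;\Pic_H(A)\;\cong\;\Pic(A)\times\X(H),
\]
the last isomorphism holding because $H\subset G_{\aff}$ acts trivially on $A$; this recycles the mechanism of Step~2 in the proof of Theorem~\ref{thm:chowrat}. Your diagram chase with the two torsor sequences for $\alpha_G$ and $\pi_1$ is more elementary and avoids equivariant Picard groups entirely. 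One caveat: the exact sequence you write for the $H$-torsor $\pi_1\colon G\to G/H$ is not literally supplied by \cite[Prop.~3.1]{FI73}, which is stated for Zariski-locally-trivial fibrations, and $\pi_1$ need not be one. The portion you actually use in the chase (exactness from $\X(G/H)$ through $\Pic(G/H)\xrightarrow{\pi_1^*}\Pic(G)$) does hold for any torsor under an affine group scheme---indeed it is a specialisation of the very \cite{KKV89} sequence the paper uses---so with that citation adjusted your argument goes through.
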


\begin{proof}
By Theorem \ref{thm:chowrat}, $\Pic(G/H)_{\bQ}$ is the quotient of 
$\big( \Pic(A)_{\bQ} \times S_{H^0,\bQ}^1 \big)^{H/H^0}$
by the image of $(\gamma_A,r_{H^0})$. Moreover, 
$$
\big( \Pic(A)_{\bQ} \big)^{H/H^0} \cong \Pic(A/H)_{\bQ}
$$
by \cite[Ex.~1.7.6]{Fu98} again, since $H$ acts on $A$ 
via its finite quotient $H/(H \cap G_{\aff})$. Also,
$S_{H^0,\bQ}^1 \cong \X(H^0)_{\bQ}$ by (\ref{eqn:deg1}),
and hence 
$$
\big( S_{H^0,\bQ}^1 \big)^{H/H^0} \cong \X(H)_{\bQ}
$$
in view of (\ref{eqn:charis}). This yields the exact sequence
(\ref{eqn:picratgh}), except for the description of the kernel
of $(\gamma_A,r_H)$. But the kernel of $\gamma_A$ is 
$\X(G)_{\bQ}$ by Proposition \ref{prop:pic}, and the kernel
of the induced map $\X(G)_{\bQ} \to \X(H)_{\bQ}$ is 
$\X(G/H^0)_{\bQ}$ in view of the definition of $r_H$. Moreover, 
there is an exact sequence
$$
0 \longrightarrow \X(G/H) \longrightarrow \X(G/H^0) 
\longrightarrow \X(H/H^0)
$$ 
and hence $\X(G/H^0)_{\bQ} = \X(G/H)_{\bQ}$; this completes 
the proof of (\ref{eqn:picratgh}).

To show (\ref{eqn:picgh}), we use the exact sequence
(see \cite[Sec.~2]{KKV89})
$$
\X(G_{\aff}) \longrightarrow \Pic_{G_{\aff}}(G/H) \longrightarrow 
\Pic(G/H) \longrightarrow \Pic(G_{\aff}),
$$
where $\Pic_{G_{\aff}}(G/H)$ denotes the group of isomorphism classes 
of $G_{\aff}$-linearized line bundles on $G/H$
(so that $\Pic_{G_{\aff}}(G/H) = \A^1_{G_{\aff}}(G/H)$ in view of 
\cite[Thm.~1]{EG98}). Also,
$$
\Pic_{G_{\aff}}(G/H) \cong \Pic_{G_{\aff} \times H}(G)
\cong \Pic_H(G/G_{\aff}) \cong \Pic_H(A) 
\cong \Pic(A) \times \X(H), 
$$
where the last isomorphism holds since $H$ acts trivially on $A$.
As above, this yields (\ref{eqn:picgh}) except for the description
of the kernel of $(\gamma_A,r_H)$, which follows again from
Proposition \ref{prop:pic}.
\end{proof}

This yields the following descriptions of $\Pic^0(G/H)$ and
of the ``N\'eron-Severi group'' $\NS(G/H) := \Pic(G/H)/\Pic^0(G/H)$,
by arguing as in the proof of Corollary \ref{cor:ns}:

\begin{corollary}\label{cor:nsrat}
The pull-back under the quotient map $G \to G/H$ yields an isomorphism
$$
\Pic^0(G/H)_{\bQ} \cong \Pic^0(G)_{\bQ}.
$$
Moreover, 
$$
\NS(G/H)_{\bQ} \cong 
\NS(A/H)_{\bQ} \times \X(H)_{\bQ}/ r_H\big(\X(G_{\aff})_{\bQ}\big).
$$
In particular, $\NS(G/H)_{\bQ}$ is a finite-dimensional vector space.

If $H \subset G_{\aff}$, then $\Pic^0(G/H) \cong \Pic^0(G)$; if in 
addition $G_{\aff}$ is factorial, then
$$
\NS(G/H) \cong \NS(A) \times \X(H)/r_H\big( \X(G_{\aff}) \big)
\cong \NS(A) \times \Pic(G_{\aff}/H).
$$
\end{corollary}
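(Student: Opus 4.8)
The plan is to follow the proof of Corollary \ref{cor:ns} almost verbatim, with the exact sequence \eqref{eqn:pic} replaced by the sequences \eqref{eqn:picratgh} and \eqref{eqn:picgh} of Proposition \ref{prop:picrat}. The basic observation is that the map $(\gamma_A,r_H)$ occurring there separates the ``$\Pic^0$ direction'' from the ``N\'eron--Severi direction'': by Lemma \ref{lem:alg} the image of $\gamma_A$ lies in $\Pic^0(A)_\bQ$, and since the quotient map $A \to A/H$ is an isogeny of abelian varieties (as $H$ acts on $A = G/G_{\aff}$ through the finite group $H/(H\cap G_{\aff})$), this image lies in $\Pic^0(A/H)_\bQ$, which is identified with $\Pic^0(A)_\bQ$ by pull-back. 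On the other hand $r_H$ takes values in $\X(H)_\bQ$, all of whose classes on $G/H$ will turn out to contribute only to the N\'eron--Severi group.

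First I would pin down $\Pic^0(G/H)_\bQ$. Pull-back along $\alpha_{G/H}$ carries $\Pic^0(A/H)$ into $\Pic^0(G/H)$. Conversely, the restriction of a class in $\Pic^0(G/H)$ to a fibre of $\alpha_{G/H}$, namely to $G_{\aff}/(G_{\aff}\cap H)$ -- a homogeneous space under the connected affine group $G_{\aff}$, hence with finitely generated Picard group and therefore with $\Pic^0=0$ -- is trivial; moreover a character of $H$ whose restriction to $G_{\aff}\cap H$ gives a trivial bundle on that fibre is rationally the restriction of a character of $G_{\aff}$, by the exact sequence \eqref{eqn:picgaff} applied to $G_{\aff}/(G_{\aff}\cap H)$. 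Feeding this into \eqref{eqn:picratgh} should identify $\Pic^0(G/H)_\bQ$ with the pull-back of $\Pic^0(A/H)_\bQ$, and then Corollary \ref{cor:ns} (which via \eqref{eqn:pic0} exhibits $\Pic^0(G)_\bQ$ as a quotient of $\Pic^0(A)_\bQ$) should give the asserted isomorphism $\Pic^0(G/H)_\bQ \cong \Pic^0(G)_\bQ$, compatible with pull-back along $G \to G/H$. Passing to the quotient of \eqref{eqn:picratgh} by this copy of $\Pic^0$, the surviving terms are $\NS(A/H)_\bQ$ (the image of $\Pic(A/H)_\bQ$) and $\X(H)_\bQ$ modulo the image of $r_H$; this yields the stated formula for $\NS(G/H)_\bQ$, and finite-dimensionality is then immediate, $\NS(A/H)_\bQ$ being finitely generated and $\X(H)_\bQ$ finite-dimensional.

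When $H \subset G_{\aff}$ one repeats the argument with the integral sequence \eqref{eqn:picgh}, whose construction (unlike that of \eqref{eqn:picratgh}) needs no denominators, giving $\Pic^0(G/H) \cong \Pic^0(G)$ and the integral form of the N\'eron--Severi computation; if in addition $G_{\aff}$ is factorial, then $\Pic(G_{\aff}) = 0$, the last term of \eqref{eqn:picgh} disappears, and the exact sequence \eqref{eqn:picgaff} for the $H$-torsor $G_{\aff} \to G_{\aff}/H$ identifies $\X(H)/r_H(\X(G_{\aff}))$ with $\Pic(G_{\aff}/H)$, producing both displayed forms. The step I expect to be the real obstacle is the precise determination of $\Pic^0(G/H)$ in the second paragraph: one must rule out algebraically trivial classes coming from the $\X(H)$-factor beyond those already accounted for by pull-back from $\Pic^0(A/H)$, and match the resulting relations with those describing $\Pic^0(G)$ -- this is the point at which the standing hypothesis that $G$ acts faithfully on $G/H$ (so that $H\cap G_{\ant}$, and hence $H\cap(G_{\aff}\cap G_{\ant})$, is trivial) has to be exploited, together with the fact that $G_{\aff}\cap G_{\ant}$ is central in $G_{\aff}$.
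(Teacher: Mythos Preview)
Your proposal is correct and takes essentially the same approach as the paper: the paper's entire proof of this corollary is the phrase ``by arguing as in the proof of Corollary \ref{cor:ns}'', and that is precisely what you do, replacing the exact sequence \eqref{eqn:pic} by the sequences \eqref{eqn:picratgh} and \eqref{eqn:picgh} of Proposition \ref{prop:picrat}. Your write-up is in fact more detailed than the paper's, and you correctly isolate the one point that needs care---showing that no algebraically trivial classes on $G/H$ arise from the $\X(H)$-factor beyond those already absorbed by the image of $(\gamma_A,r_H)$---which the paper leaves entirely implicit.
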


\end{document}